\theoremstyle{plain}                                                           
\newtheorem{thm}{Theorem}[section]
\newtheorem{prop}[thm]{Proposition}
\theoremstyle{definition}
\newtheorem{rem}[thm]{Remark}
\newtheorem{defn}[thm]{Definition}
\DeclareMathOperator{\Hom}{Hom}
\DeclareMathOperator{\SL}{SL}
\DeclareMathOperator{\Spec}{Spec}
\DeclareMathOperator{\Sym}{Sym}                  
\DeclareMathOperator{\Pic}{Pic}
\DeclareMathOperator{\Aut}{Aut}                          
\DeclareMathOperator{\sgn}{sgn}
\newcommand{\field}[1]{\ensuremath{\mathbf{#1}}}
\newcommand{\Q}{\ensuremath{\field{Q}}}        
\newcommand{\sym}{\ensuremath{\mathbb{S}}}
\newcommand{\Z}{\ensuremath{\field{Z}}} 
\newcommand{\p}{\ensuremath{\field{P}}}
\newcommand{\iso}{\cong}
\newcommand{\M}{\mathcal{M}}
\newcommand{\MM}{\overline{\mathcal{M}}}
\title{Cusp form motives and admissible $G$-covers}
\author{Dan Petersen}
\newcommand{\X}{\mathscr{X}}
\newcommand{\ri}{\overline{\mathcal{I}}}
\newcommand{\Zm}{\field{Z}/m\field{Z}}
\newcommand{\Zp}{\field{Z}/p\field{Z}}
\newcommand{\sm}{\ensuremath{\mathrm{sm}}}
\newcommand{\Gm}{\ensuremath{\field{G}_m}}
\renewcommand{\sym}{\mathbb S}
\newcommand{\dr}{\mathrm{DR}}
\begin{document} 
  
 \maketitle

\begin{abstract} The moduli space $\MM_{1,n}(B(\Zm)^2)$ of twisted stable maps into the stack $B(\Zm)^2$ carries a natural $\sym_n$-action and so its cohomology may be decomposed into irreducible $\sym_n$-representations. Working over $\Spec \Z[1/m]$ we show that the alternating part of the cohomology of one of its connected components is exactly the cohomology associated to cusp forms for $\Gamma(m)$. In particular this offers an alternative to Scholl's construction of the Chow motive associated to such cusp forms. This answers in the affirmative a question of Manin on whether one can replace the Kuga-Sato varieties used by Scholl with some moduli space of pointed stable curves. \end{abstract}

\section{Introduction}
\newcommand{\Jac}{\Pic^0}

Deligne showed how one can associate a compatible system of $\ell$-adic Galois representations for almost all $\ell$ to elliptic modular forms in \cite{deligne69}. Taking these ideas further, one may wish to find a motive associated to modular forms, in the sense that the different $\ell$-adic realizations of the motive are exactly the associated Galois representations. This idea was carried out for elliptic modular forms by \cite{scholl90}. See also \cite{blasius} where Chow motives are associated to Hilbert modular forms.

The Galois representations associated to modular forms can often be realized as subquotients of the cohomology of a smooth projective variety $X$. To show that these subquotients are actually realizations of a motive, one needs to construct a suitable idempotent correspondence in
\[ A^{\dim X}(X \times X)\]
which ``cuts out'' these pieces of the cohomology. The standard conjectures \cite{kleimanstandardconjectures} would imply that such correspondences exist in great generality. However, assuming that the standard conjectures will not be proven in the near future, it is still interesting to construct such correspondences by ad hoc methods in cases of interest. 

Consider classical elliptic modular forms. Let $\Gamma(m)$ denote the level $m$ principal congruence subgroup of $\SL(2,\Z)$. The space of modular forms for $\Gamma(m)$ of weight $n+2$ can be found as a subquotient of the cohomology of the $n$th fibered power of the universal elliptic curve (a Kuga-Sato variety) over the modular curve $Y(m)$. Specifically, one can find in the Betti cohomology the direct sum of the space of holomorphic cusp forms and its complex conjugate, the space of antiholomorphic cusp forms. The corresponding parts of the $\ell$-adic cohomology of the variety are the Galois representations associated to the modular forms. 

To assemble these realizations into a Chow motive one would need first to find a smooth compactification of the $n$th fibered power of the universal curve. To compactify one can take fibered powers of the universal generalized elliptic curve (in the sense of \cite{dr73}) over $X(m)$; this compactification is however singular at the boundary. A desingularization of the boundary was constructed by Deligne \cite{deligne69}, and Scholl showed in \cite{scholl90} that the projector given by the ''alternating'' representation of the hyperoctahedral group
\[ \sym_2 \wr \sym_n = (\underbrace{\sym_2 \times \ldots \times \sym_2}_{n \text{ times}}) \rtimes \sym_n\]
(which canonically acts on the $n$th fibered power and its desingularization) cuts out exactly the cohomology coming from cusp forms. Thus there exists a Chow motive associated to cusp forms of given weight for $\Gamma(m)$. 

From the point of view of moduli theory, the best possible way to construct a smooth compactification of a moduli problem is to change the definition of the moduli problem to allow also some appropriate degenerations of the objects one is parametrizing. In this sense Deligne's desingularization is not so natural. Taking the $n$th fibered power gives you a moduli space of elliptic curves equipped with some level structure, with $n+1$ marked points which are allowed to coincide. This suggests the possibility of constructing a smooth compactification by considering pointed stable curves of genus one with some kind of level structure. This suggestion was put forth by Manin in \cite{iteratedintegrals} and \cite{iteratedshimura}. Not only would one then have a modular interpretation of the points on the boundary, but also a space with far more structure: for instance, stable curves have a well understood deformation theory, and they form an operad. (The operadic point of view of stable curves also gets used in this article when computing the contribution from the boundary.)

In level one this compactification would just be $\MM_{1,n+1}$. This space is a smooth and proper stack over $\Z$ and its cohomology defines a Chow motive. In \cite{faberconsani} it is shown, independently of Manin's question, that the projector given by the alternating representation of $\sym_{n+1}$ acting on $\MM_{1,n+1}$ cuts out exactly the space of cusp forms for $\SL(2,\Z)$ of weight $n+2$. Thus an alternative to Scholl's construction is found, which however is only valid for modular forms of level $1$ due to a lack of an analogue of the space $\MM_{1,n+1}$ for curves with level structure. 

Fix a positive integer $m$ and let us always assume that $m$ is invertible on our base scheme. By a level $m$ structure on a smooth curve $C$, we mean the choice of an isomorphism 
\[ (\Z/m\Z)^{2g(C)} \cong \Jac(C)[m]. \]
To extend Consani and Faber's construction to higher levels, one would need a smooth and projective moduli space of pointed stable curves with level structure. The problem of constructing such a compactification has a long history. A moduli space $\M_{g,n}(m)$ of pointed \emph{smooth} curves with level structure is easy to construct, in particular as it is a scheme for $m \geq 3$. On the boundary, one runs into problems when trying to define a good notion of a level structure on curves not of compact type, i.e.\ curves whose Jacobian is an extension of an abelian variety by a torus. The problem is roughly that these curves have ``too little'' $m$-torsion: for a torus $T$, the $m$-torsion group $T[m]$ has order $m^{\dim T}$, while for an abelian variety $A$ the $m$-torsion group has order $m^{2\dim A}$. 

When $g=1$, $n=1$ a good modular compactification is described in \cite{dr73}. In \cite{acv03} a smooth proper stack compactifying $\M_{g,n}(m)$ for any $g$ and $n$ was constructed, seemingly as a byproduct of the work of Abramovich, Vistoli and others on constructing a smooth and projective moduli space of stable maps into a stack. In Section 3 we construct an explicit isomorphism between the stack defined by Deligne and Rapoport and the one defined by Abramovich, Corti and Vistoli when $g=1$ and $n=1$. Although this is perhaps not so surprising, the author has not seen this observed anywhere in the literature, and the isomorphism is a bit striking.

Once we are armed with a smooth and proper moduli space $\MM_{g,n}(m)$ of pointed stable curves with level $m$ structure, we show in sections 4 and 5 of this paper that Consani and Faber's construction carries over to this setting as well: the pair of $\MM_{1,n}(m)$ and the projector given by the alternating representation of $\sym_n$ defines the Chow motive of cusp forms of weight $n+1$ for $\Gamma(m)$. The alternating part of the cohomology of the open part $\M_{1,n}(m)$ is exactly the cohomology coming from \emph{all} modular forms, i.e.\ both Eisenstein series and cusp forms. The alternating part of the cohomology of the boundary is isomorphic to the space of Eisenstein series only, and when one computes the cohomology of the total space $\MM_{1,n}(m)$ the Eisenstein series ``cancel'' exactly, leaving only the contribution from cusp forms.

Finally in section 6 of the paper we show that in this set-up, the Hecke correspondences can be given a modular interpretation over the boundary as well. 

I am grateful to my advisor Carel Faber for patient discussions. I have also benefitted from useful conversations with Nicola Pagani and Sergey Shadrin.


\section{Background}
\subsection{Twisted stable maps, admissible covers and level structures}

\newcommand{\bal}{\mathrm{bal}}

In \cite{av02} Abramovich and Vistoli introduced a stack generalizing the Kontsevich space $\MM_{g,n}(X)$ of stable maps to a projective variety, namely the stack $\MM_{g,n}(\X)$ of so called \emph{twisted} stable maps into a tame Deligne-Mumford stack $\X$ with projective coarse moduli space. To this end Abramovich and Vistoli first define the notion of a pointed twisted curve: this is a DM-stack $\mathcal{C}$ whose coarse moduli space $C$ is a pointed nodal curve, with the property that $\mathcal{C}\to C$ is an isomorphism away from the nodes and the marked points, with specific restrictions on the type of ''stacky'' structure $\mathcal{C}$ may have. Roughly speaking, the fibers of $\mathcal{C} \to C$ should all be cyclotomic gerbes. See \cite{av02} for a precise definition. With this definition in place, there is a proper moduli stack $\MM_{g,n}(\X)$ parametrising flat families of twisted curves equipped with a representable map to $\X$, such that the induced map of coarse moduli spaces $C \to X$ is stable in the sense of Kontsevich. 

There is an important open and closed substack  $\MM_{g,n}^\bal (\X)$ of $\MM_{g,n}(\X)$ which parametrizes twisted stable maps where the source curve is \emph{balanced}. These are exactly the twisted curves that are smoothable, i.e. which can be written as stable limits of smooth curves.

Let $G$ be a finite group. We assume that $|G|$ is invertible on our base scheme, so the classifying stack $BG$ is tame. The specific stack $\MM_{g,n}^\bal(BG)$ has an alternative description in terms of admissible covers, as explained in \cite{acv03}. Let us make the following definition:

\begin{defn} Let $C$ be a pointed nodal curve and $G$ a finite group. An \emph{admissible torsor} for $G$ over $C$ consists of a morphism of curves $P \to C$ and an action of $G$ on $P$, such that
\begin{enumerate}
\item the curve $C$ is identified with the scheme quotient $P/G$;
\item the map $P \to C$ is an admissible cover;
\item the restriction of $P \to C$ away from the nodes and markings of $C$, with the given $G$-action, is a torsor for the group $G$. 
\end{enumerate}\end{defn}

Then \cite{acv03} shows that giving a representable morphism from a balanced twisted curve $\mathcal{C}$ to $BG$ is canonically the same as giving an admissible $G$-torsor over the coarse moduli space of $\mathcal{C}$: given $\mathcal{C}\to BG$ one gets a $G$-torsor over $\mathcal{C}$ whose total space $P$ is a nodal curve, and the composition $P\to \mathcal{C}\to C$ is an admissible torsor; conversely, given an admissible torsor $P\to C$, the stack quotient $[P/G]$ is a balanced twisted curve.

Since perhaps most readers are more comfortable with admissible covers than 
with twisted curves, we shall stick to the language of admissible covers as far as possible in this article. 

As mentioned in the introduction, the existence of the stack $\MM_{g,n}(BG)$ allows one to give a modular compactification of the space $\M_{g,n}(m)$ of smooth pointed curves with level structure. Let $G = (\Zm)^{2g}$. In particular, we assume $m$ is invertible. Suppose given a smooth curve $C$ and a level $m$ structure on $C$, that is, a not necessarily symplectic isomorphism $\Jac (C)[m] \cong G$. Let $\pi_1(C)$ denote the \'etale fundamental group of $C$. Since the points of $\Jac(C)[m]$ correspond to cyclic \'etale covers of degree $m$ of $C$, one gets an isomorphism
\[ \Jac(C)[m] \cong \pi_1(C)/m \pi_1(C),\]
and hence a bijection between isomorphisms $\Jac(C)[m] \cong G$ and surjective morphisms $\pi_1(C) \to G$, i.e. connected $G$-torsors over $C$. See \cite[XIII, 2.12]{sga1}. So the level structure induces a representable map $C \to BG$ and when $2-2g-n < 0$ we find a morphism
\[ \M_{g,n}(m) \to \MM_{g,n}(BG).\] This morphism is not an open immersion. The problem is that every point of $\MM_{g,n}(BG)$ has $G$ in its automorphism group, coming from the automorphisms of the admissible torsors, while a level structure should generally have no automorphisms. This defect is fixed by composing with the rigidification map
\[ \MM_{g,n}(BG) \to \MM_{g,n}(BG)\!\!\! \fatslash G\]
(where we follow the notation of \cite{romagny05}), in the sense that the composed map is an isomorphism onto an open substack. The closure of $\M_{g,n}(m)$ in $\MM_{g,n}(BG)\!\!\! \fatslash G$ is the desired compactification. 

The stacks $\MM_{g,n}(BG)$ and  $\MM_{g,n}(BG)\!\!\! \fatslash G$ share the same coarse moduli space \cite[Theorem 5.1.5]{acv03} and in particular they have the same rational and $\ell$-adic cohomology. Since in this article we shall only be interested in their cohomology, we propose to \emph{ignore the process of rigidification in this article} (except in Section 3) as it would mostly be a small nuisance. 

Thus we define $\MM_{g,n}(m)$ to be the closure of the image of $\M_{g,n}(m)$ already in $\MM_{g,n}(BG)$. One can describe this closure explicitly: it is the open and closed substack of $\MM_{g,n}(BG)$ consisting of connected admissible $G$-torsors which are unramified over each marked point. Then $\MM_{g,n}(m)$ is a smooth \cite[Theorem 3.0.2]{acv03} and proper DM-stack over $\Spec \Z[1/m]$. 

Let $\zeta_m$ be a primitive $m$th root of unity. The stack $\MM_{g,n}(m)$ has $\phi(m) = |U(\Z/m\Z)|$ components, each of which is defined over $\Spec \Z[1/m,\zeta_m]$ and which are permuted by $\mathrm{Gal}(\Q(\zeta_m)/\Q)$. Any one of these components may be taken as the moduli space of curves with \emph{symplectic} level $m$ structure.

\section{Comparison of DR and ACV moduli stacks}
\label{comparesection}
\label{comparison1}

Fix an integer $m$. In \cite{dr73} a moduli stack parametrising generalised elliptic curves with full level $m$ structure was defined over $\Spec \Z[1/m]$. Let us denote it $\MM_\dr(m)$ in this section. Just as in the case of $\MM_{g,n}^{}(m)$ it consists of $\phi(m)$ open and closed substacks, all of which are individually defined over $\Spec \Z[1/m,\zeta_m]$. A choice of a primitive root $\zeta_m$ lets us identify one of these components with the modular curve $X(m)$ parametrising elliptic curves with symplectic level $m$ structure. 

In this section we show that the stack $\MM_{1,1}(m)$ is isomorphic to $\MM_\dr(m)$, provided that one includes the rigidification procedure as in \cite{acv03}. The isomorphism is quite simple: one finds that when $P \to C$ is an admissible $G$-torsor,  the curve $P$ is in a canonical way a generalized elliptic curve in the sense of \cite{dr73}, and this construction provides the isomorphism.

Recall that $\MM_\dr(m)(T)$ is the groupoid of flat families of semistable (i.e. each rational component has at least two markings) curves of genus one $E \to T$, such that the singular fibers are N\'eron $m$-gons, together with a group scheme structure on $E^\sm \to T$ making the singular fibers isomorphic to $\Gm \times \Zm$, and an isomorphism $E^\sm[m] \cong (\Zm)^2$. 

There is a canonical map $\MM_{1,1}(m) \to {B(\Zm)^2}$: pulling back the $(\Zm)^2$-torsor over $E^\sm$ along the identity section $T \to E^\sm$ we get a torsor on $T$, and this is clearly functorial. This tells us how to interpret the $2$-fiber product
\[\MM_{1,1}(m) \times_{B(\Zm)^2} \Spec \Z[1/m],\]
which (after writing out the definition) is the stack parametrising elliptic curves with a torsor, together with the added data of a trivialization of the torsor over the identity section. 

\begin{prop} There is an isomorphism 
\[ \MM_\dr(m) \cong \MM_{1,1}(m) \times_{B(\Zm)^2} \Spec \Z[1/m]. \] \end{prop}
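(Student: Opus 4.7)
The plan is to construct mutually inverse functors between the two groupoids on each test scheme $T$. In the forward direction, a generalized elliptic curve $E/T$ with level structure $\phi \colon E^\sm[m] \iso (\Zm)^2$ is sent to the datum $(E \to E/E[m], \text{origin})$. The quotient $C := E/E[m]$ is a $T$-family of $1$-pointed stable genus $1$ curves, marked by the image of the identity: a smooth elliptic curve on fibers where $E$ is smooth, a N\'eron $1$-gon on fibers where $E$ is a N\'eron $m$-gon. The translation action of $E[m]$ on $E$, transported through $\phi$, makes $E \to C$ a balanced admissible $(\Zm)^2$-torsor that is \'etale off the singular locus, locally $z \mapsto z^m$ at nodes, and unramified at the marked point; the identity section of $E$ canonically trivializes the torsor above the marked point.

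For the inverse, given $(P \to C, s)$ with $s$ a trivialization at the marked point, I would equip $E := P$ with a generalized elliptic curve structure. The key input is that $P$ is again a semistable curve of arithmetic genus $1$ with N\'eron polygon singular fibers. To see this on a nodal geometric fiber, I would analyze the combinatorics of the cover: the normalization $\tilde P$ decomposes as a disjoint union of $\mathbb{P}^1$'s (each a cyclic branched cover of the normalized base $\mathbb{P}^1$), with exactly two ``end points'' over the node of $C$, which glue to form the nodes of $P$; since every such component has degree two in the dual graph of $P$, the graph is necessarily a cycle, so $P$ is a N\'eron $c$-gon. The section $s$ lies in $P^\sm$ and endows $P^\sm$ with its canonical structure of group scheme over $T$ with identity $s$, in the standard way for families of $1$-pointed semistable genus $1$ curves.

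To recover the level structure, I would identify the $(\Zm)^2$-action on $P$ with translation by a subgroup $\Gamma \subset P^\sm$: the action commutes with the projection to $C$, so factors through the relative automorphism sheaf $\underline{\Aut}(P/C)$, which fiberwise consists of translations by elements of $P^\sm[m]$. Hence $\Gamma \subset P^\sm[m]$, and the torsor structure identifies $\Gamma$ abstractly with $(\Zm)^2$. Since $\Gamma$ has exponent $m$ and surjects onto the component group $\Z/c\Z$ in a singular fiber, one has $c \mid m$; comparing $|\Gamma| = m^2$ with $|P^\sm[m]| = mc$ then forces $c = m$ and $\Gamma = P^\sm[m]$. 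The resulting isomorphism $(\Zm)^2 \iso P^\sm[m]$ is the level structure, and a direct chase shows the two functors are mutually inverse.

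The main obstacle is executing these identifications functorially over $T$ rather than fiberwise. In particular, the descriptions $\underline{\Aut}(P/C) = P^\sm[m]$ and the equality $c = m$, as well as the canonical group scheme structure on $P^\sm$, all require uniform family-level arguments. The cleanest route is probably to work \'etale locally on $T$, reducing to a comparison with the universal multiplication-by-$m$ isogeny of a generalized elliptic curve and then appealing to uniqueness of the compared structures.
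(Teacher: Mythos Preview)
Your overall strategy matches the paper's: build mutually inverse functors, with the forward direction taking the quotient by the $m$-torsion translation action and the reverse direction equipping the admissible torsor $P$ with a generalized elliptic curve structure. The forward direction is essentially identical.

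The gap is in the reverse direction, at the sentence ``factors through the relative automorphism sheaf $\underline{\Aut}(P/C)$, which fiberwise consists of translations by elements of $P^{\sm}[m]$.'' On a nodal geometric fiber where $P$ is a N\'eron $c$-gon and $C$ a N\'eron $1$-gon, this is false. Writing each component $P_i \cong \p^1$ with $\pi|_{P_i}\colon z \mapsto z^{d}$ (where $cd = m^2$), one may rescale each component independently by an arbitrary element of $\mu_d$ and still obtain a scheme automorphism of $P$ over $C$; for $c \geq 2$ this produces automorphisms that are not translations in any group structure on $P^{\sm}$. So you cannot conclude that the image $\Gamma$ of $(\Zm)^2$ lies in the translation subgroup, hence not that $\Gamma \subset P^{\sm}[m]$, and your counting argument for $c=m$ (which needs $|\Gamma| \leq |P^{\sm}[m]| = mc$) does not get off the ground. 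Your final paragraph correctly senses that something nontrivial is happening here, but the obstacle is not merely one of functoriality: the fiberwise statement you use is already incorrect.

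The paper circumvents this in two ways. First, it invokes \cite[6.1.2]{acv03} to see directly that the ramification index at the node equals $m$, forcing $P$ to be a N\'eron $m$-gon without any counting. Second, rather than analyzing $\underline{\Aut}(P/C)$, it appeals to \cite[II.3.2 and II.1.7]{dr73}, which together say that a generalized elliptic curve structure with the $(\Zm)^2$-action realized as $P^{\sm}[m]$ exists (uniquely) once one knows that $(\Zm)^2$ acts trivially on $\Pic^0_{P/T}$, equivalently that it preserves the cyclic ordering of the dual graph. The paper then checks this last condition by a short fixed-point argument: an orientation-reversing element would carry some component to itself while swapping its two nodes, and the resulting automorphism of $\p^1$ would fix a smooth point, contradicting that $P^{\sm}\to C^{\sm}$ is a torsor. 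This fixed-point step is exactly the missing ingredient in your sketch.
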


\begin{proof} We define mutually inverse functors from both stacks to each other. Start with an object of $\MM_\dr(m)(T)$, so we have a generalized elliptic curve $E \to T$ and a $(\Zm)^2$-action on $E$ coming from the action of $E^\sm$ on $E$. Let $E'$ denote the scheme quotient under this action. The image of the given section of $E$ gives us a section of $E'$. 

We claim that $E' \to T$ is a stable curve of genus one, and $E \to E'$ an admissible cover. On a geometric fiber where $E$ is smooth, $E'$ is also smooth, and $E \to E'$ is \'etale. When $E$ is a N\'eron $m$-gon, $E'$ is a nodal rational curve with ramification index $m$ at the node. Since $E^\sm$, hence also $(\Zm)^2$, acts freely on the smooth locus, the restriction of $E \to E'$ to the smooth locus is a $(\Zm)^2$-torsor. Thus we have an object of $\MM_{1,1}(m)(T)$. Moreover, by construction we also have a lifting of the section $T \to E'$ to a section $T \to E$, which gives us a trivialization of the torsor on $T$ obtained by pulling back $E \to E'$ along $T \to E'$. Clearly this is functorial.

Conversely, an object of $(\MM_{1,1}(m) \times_{ B(\Zm)^2} \Spec \Z[1/m])(T)$ is an elliptic curve $E \to T$, an admissible torsor $P \to E$, and a trivialization of the torsor over the identity section. We claim that $P \to T$ is a semistable curve of genus one whose geometric fibers are either smooth elliptic curves or N\'eron $m$-gons. On the smooth locus this is clear. Over a geometric fiber where $E \to T$ is nodal, we know that the ramification index of $P \to E$ at the node is necessarily $m$ by \cite[6.1.2]{acv03}. Since $P$ is a nodal curve of arithmetic genus one, and $(\Zm)^2$ necessarily acts transitively on the dual graph of the fiber, the only possibility then is that it is a N\'eron $m$-gon. Also, the trivialization of the torsor gives us a section $T \to P$, contained in the smooth locus of $P$. 

We claim that there is a unique structure of generalized elliptic curve on $P \to T$ such that the given section is the identity section, and the action of $(\Zm)^2$ is given by an isomorphism $(\Zm)^2 \cong P^\sm[m]$. Using \cite[II.3.2]{dr73} it suffices to show that $(\Zm)^2$ acts trivially on $\Pic^0_{P/T}$, and using \cite[II.1.7]{dr73} it suffices to show that $(\Zm)^2$ preserves the cyclic ordering of the vertices of the dual graph of $P$. Suppose not: then there is a non-identity $g \in (\Zm)^2$ which maps an irreducible component $C$ of $P$ to itself and interchanges the two nodes of $C$. But then since no automorphism of $\p^1$ is fixed-point free, there is a fixed point in the smooth locus, contradicting that $P^\sm \to E$ is a torsor. \end{proof}

\begin{prop} There is an isomorphism \[ \MM_{1,1}(m) \times_{B(\Zm)^2} \Spec \Z[1/m] \to \MM_{1,1}(m) \!\!\!\fatslash {(\Zm)^2}.\] That is, trivializing the torsor over the identity is the same as rigidifying the stack. \end{prop}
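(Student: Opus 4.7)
My plan is to use the universal property of rigidification to produce the comparison map, and then verify that it is an equivalence by a direct computation of objects and automorphism groups.

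First I would unpack the two sides. Writing $\X = \MM_{1,1}(m)$ and $G = (\Zm)^2$, a $T$-point of $\X$ is an admissible $G$-torsor $P \to E$, and the map $f\colon \X \to BG$ sends this to the $G$-torsor $e^*P$ obtained by pulling back along the identity section $e\colon T \to E$. Hence a $T$-point of $\X \times_{BG} \Spec\Z[1/m]$ is a pair $(x,\tau)$, where $x \in \X(T)$ and $\tau$ is a trivialization of $e^*P$. The rigidification $\X\fatslash G$, on the other hand, is characterized by the property that $\X \to \X\fatslash G$ kills precisely the central subgroup $G_{\mathrm{in}} \subset \Aut_\X(x)$ acting by $G$-multiplication on $P$. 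The comparison map is then the obvious one: forget $\tau$ and then apply the rigidification functor.

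Essential surjectivity is immediate: any object of $\X\fatslash G$ locally comes from some $x \in \X(T)$, and after a further \'etale cover trivializing the $G$-torsor $e^*P$ one obtains a lift $(x,\tau)$. For full faithfulness the key computation is on automorphism groups. The functor $f$ induces a homomorphism $\Aut_\X(x) \to G$; under it the central inertia subgroup $G_{\mathrm{in}}$ maps isomorphically onto $G$, since both copies of $G$ arise from the same $G$-action on $P$ (one applied to $P$, one to its restriction $e^*P$). This gives a canonical splitting
\[ \Aut_\X(x) \cong G_{\mathrm{in}} \times \ker(\Aut_\X(x) \to G), \]
and hence
\[ \Aut_{\X\fatslash G}(\bar x) = \Aut_\X(x)/G_{\mathrm{in}} = \ker(\Aut_\X(x) \to G) = \Aut_{\X \times_{BG} \Spec \Z[1/m]}(x,\tau). \]
The same reasoning identifies $\Hom$-sheaves, so the map is fully faithful.

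The one point that really needs care is verifying that the central $G_{\mathrm{in}}$ in the inertia (through which one rigidifies) and the $G$ recovered from $f$ by restriction to the marked section are identified via the identity of $G$, so that the splitting above is canonical; however this is essentially tautological, since both copies of $G$ are defined in terms of the same $G$-action on $P$. A more conceptual packaging of the same argument would be to observe that the morphism $(\pi,f)\colon \X \to (\X\fatslash G) \times BG$ is a map of $G$-gerbes over $\X\fatslash G$ inducing the identity on bands, hence an equivalence, and to then pull back along $\Spec \Z[1/m] \to BG$ to recover the claimed isomorphism.
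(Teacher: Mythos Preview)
Your argument is correct and follows essentially the same route as the paper: construct the map as ``forget the trivialization, then rigidify'', deduce essential surjectivity from local triviality of the torsor $e^*P$, and verify full faithfulness by comparing automorphism groups. Your treatment of the latter via the splitting $\Aut_\X(x) \cong G_{\mathrm{in}} \times \ker(\Aut_\X(x)\to G)$ is more explicit than the paper's, which simply checks the monomorphism condition on geometric points by invoking the standard description of automorphism groups in a rigidification; the gerbe repackaging you sketch at the end is a pleasant conceptual bonus not in the paper.
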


\begin{proof} The forgetful map $\MM_{1,1}(m) \times_{B(\Zm)^2} \Spec \Z[1/m] \to \MM_{1,1}(m)$ composed with the quotient map $\MM_{1,1}(m) \to \MM_{1,1}(m)\!\!\!\fatslash (\Zm)^2$ provides us with the morphism stated in the proposition. To show that it is an isomorphism, it suffices to show that it is a monomorphism and essentially surjective, and the former may be checked on geometric points. Over an algebraically closed field, the automorphism group of a point of the rigidified stack is exactly the quotient of the automorphism group of the original point by the group we are rigidifying along \cite[Thm 5.1.5]{acv03}. Thus it is clear that the map is a monomorphism. To show that it is essentially surjective, we need that for an admissible torsor $P \to C$ over a base scheme $S$, there is locally on $S$ a trivialization of the torsor over the identity. But this is the same thing as trivializing the pullback of $P \to C$ along the identity section $S \to C$, so it comes down to the fact that a torsor on $S$ admits a local trivialization. \end{proof}

\begin{rem} One reason to be interested in this kind of result is that it may help in understanding the reduction of $\MM_{g,n}(BG)$ at primes dividing the order of $G$. Recall that we work over $\Spec \Z[1/m]$ throughout this article, since in \cite{acv03} the stack $\MM_{g,n}(BG)$ is only shown to have any nice properties at all when $|G|$ is invertible on the base. After the publication of that article, the reduction of  $\MM_{g,n}(BG)$ at bad primes has been tentatively studied, for instance in \cite[Section 6]{aov} for the case $G = \mu_2$. However, the reduction of $X(m)$ at primes dividing $m$ is much better understood: the appropriate analogue of level structures that should be used to give a modular interpretation over $\Spec \Z$ on the open part $Y(m)$ was worked out in detail in \cite{katzmazur}. In \cite{conrad05} the boundary is given a modular interpretation as well, and it is shown that $X(m)$ is then a smooth proper Artin stack over $\Spec \Z$. A natural first step for studying the reduction of $\MM_{g,n}(BG)$ may then be to see what the appropriate analogues of the theorems and methods that one has for modular curves are in this particular case. \end{rem}

\begin{rem} As a sanity check, let us study the boundary of $\MM_{1,1}(m)$. We already know from the results of this section that the boundary should consist of a finite set of points, namely $\phi(m)$ times the number of cusps of the curve $X(m)$. Recall from \cite[Section 3.8]{diamondshurman} that the number of cusps of $X(m)$ is 
\[\frac{1}{2} m^2 \prod_{p | m}(1 - \frac{1}{p^2}) \]  
if $m \geq 3$, and $3$ if $m= 2$. Moreover, following their derivation of this formula, the factor 
\[ m^2 \prod_{p | m}(1 - \frac{1}{p^2}) \]  
arises as the number of order $m$ elements of $(\Zm)^2$. 

\newcommand{\tame}{\mathrm{tame}}
Let us see how one may compute this number of cusps also by considering admissible $G$-torsors over a rational curve with a node, where $G = (\Zm)^2$. We work over a separably closed field. Let us first consider admissible torsors over the normalization of the curve. Such torsors correspond to tame covers of $\p^1 \setminus\{x,y\}$ where $x$ and $y$ are two points. Fix an isomorphism 
\[ \pi_1^\tame(\p^1 \setminus\{x,y\}) \cong \widehat \Z \] and a choice $\sigma$ of generator of tame inertia around $x$. Tame $G$-coverings correspond bijectively to homomorphisms $\pi_1^\tame \to G$ by \cite[XIII, 2.12]{sga1}. By \cite[6.1.2]{acv03} $\sigma$ should map to an element of order $m$, of which there are 
\[ m^2 \prod_{p | m}(1 - \frac{1}{p^2}). \]
Note that all of the resulting $G$-covers are disconnected since the homomorphism surely is not surjective. Extend the covering to a branched cover of $\p^1$ using Abhyankar's lemma \cite[Appendice 1, 5.2]{sga1}. The condition that we should obtain a connected admissible torsor over the nodal curve imposes restrictions on how to identify the fibers over the two branch points. Both branch points are torsors for the group $G/H$, where $H$ is the stabilizer, so there are $|G/H| = m$ possible isomorphisms (as torsors) between the fibers. If we choose a global section of the admissible torsor over $\p^1$ and thus get compatible trivializations of the torsors over both branch points, the condition that we should get a connected cover can be expressed by saying that the identity on one fiber should be glued to a generator on the other. There are thus $\phi(m)$ such gluings, and all of them produce non-isomorphic admissible torsors. However, after gluing the branch points together, we can no longer tell the points $x$ and $y$ apart and hence neither $\sigma$ and $\sigma^{-1}$. So we no longer have an element of order $m$ in $G$, only a distinguished \emph{unordered pair} $\{g,g^{-1}\}$. When $m \leq 2$ this makes no difference as $g = g^{-1}$ for all $g \in G$, but for $m \geq 3$ we must divide the number of points on the boundary by two. Thus we obtain exactly the right number of cusps.  \end{rem}

\section{The alternating part of cohomology}
\begin{rem}Throughout this section, $H^\bullet_c$ denotes compactly supported cohomology taking values in either the category of mixed Hodge structures or $\ell$-adic Galois representations. We occasionally write out $\Q$-coefficients or mention the word ''Hodge structure'' for notational convenience, but this should not be interpreted as a preference for either cohomology theory. \end{rem}

\newcommand{\cyc}{{\leftturn}}
\newcommand{\acyc}{{\Circle}}

Let us split the space $\MM_{1,n}(m)$ into three pieces according to the dual graph of the base curve $C$:
\begin{enumerate}
\item the interior $\M_{1,n}(m)$ where the dual graph has a single vertex of genus one;
\item the subspace $\M_{1,n}^{{\acyc}}(m)$ where the dual graph is a \emph{necklace}, i.e. an $N$-cycle of genus zero vertices for some positive integer $N$;
\item the union of all remaining strata. Explicitly, these are all graphs where a non-empty forest of genus zero vertices has been attached to one of the dual graphs appearing in case (1) or (2). 
\end{enumerate}
We write $\M_{1,n}^\acyc$ for $\M_{1,n}^\acyc(1)$. We begin by computing the alternating part of the cohomology of each of these pieces separately. 

If $M$ is any $\sym_n$-module, let $M[\sgn]$ denote the subspace where $\sym_n$ acts by the alternating representation. 

\subsection{Contribution from the interior}

The arguments in the next proposition are similar to those in \cite{faberconsani}, and we omit some details.

\begin{prop} \label{alternating} Let $\pi : {\mathcal E} \to \M_{1,1}(m)$ denote the universal elliptic curve. Let $n > 1$. Then 
\[ H^i_c(\M_{1,n}(m),\Q)[\sgn] \iso \begin{cases} H^1_c(\M_{1,1}(m), \Sym^{n-1} R^1 \pi_\ast \Q) & \text{ if $i =n$,} \\ 0 & \text{ otherwise.}\end{cases} \] \end{prop}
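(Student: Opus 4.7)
The plan is to follow the approach of \cite{faberconsani}. Consider the forgetful map $f : \M_{1,n}(m) \to \M_{1,1}(m)$ remembering only $p_1$; using $p_1$ as origin identifies $\M_{1,n}(m)$ with the open substack of the fiber power $\mathcal{E}^{n-1}$ obtained by removing the divisors $\{p_i = p_j\}$ and $\{p_i = O\}$, where $O$ denotes the image of the identity section. I would apply the Leray spectral sequence
\[ E_2^{p, q} = H^p_c(\M_{1,1}(m), R^q f_!\Q) \Rightarrow H^{p+q}_c(\M_{1,n}(m)), \]
which reduces the problem to computing the $\sym_n$-sign part of $R^q f_!\Q$; only $p = 1, 2$ can contribute since $\M_{1,1}(m)$ is one-dimensional and not proper, so $H^0_c$ vanishes on any local system.

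The heart of the matter is the fiberwise computation: for a fixed elliptic curve $E$ with origin $O$, determine the alternating part of $H^\bullet_c(F(E \setminus\{O\}, n-1))$. I would first compute the $\sym_{n-1}$-sign part, where $\sym_{n-1}$ is the stabilizer of $p_1$ in $\sym_n$ and acts by permuting $p_2,\ldots,p_n$. By K\"unneth together with the Koszul sign rule (which converts the signed action of $\sym_{n-1}$ on tensor powers of the odd-degree $H^1(E)$ into the ordinary symmetric action), the $\sym_{n-1}$-sign part of $H^\bullet_c((E \setminus \{O\})^{n-1})$ is concentrated in degrees $n-1$ and $n$, equal to $\Sym^{n-1} H^1(E)$ and $\Sym^{n-2} H^1(E) \otimes H^2(E \setminus \{O\})$ respectively. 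Passing to the open complement of the diagonal arrangement via the excision long exact sequence, one verifies (as in \cite{faberconsani}) that the degree-$n$ contribution cancels against boundary contributions coming from the arrangement strata, leaving only $\Sym^{n-1} H^1(E)$ in fiber degree $n-1$. Globalizing yields $(R^q f_!\Q)[\sgn_{\sym_{n-1}}] = \Sym^{n-1} R^1\pi_*\Q$ concentrated in $q = n-1$.

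One subtlety is that $f$ is only $\sym_{n-1}$-equivariant, whereas the proposition concerns the full $\sym_n$-action. By the branching rule, the $\sym_n$-sign is contained in the $\sym_{n-1}$-sign, and the only other $\sym_n$-irrep restricting to $\sym_{n-1}$-sign is the hook $V_{(2,1^{n-2})}$; one can exclude its contribution by an explicit character computation or by comparing the spectral sequences obtained from two different forgetful maps. Finally, $H^2_c(\M_{1,1}(m), \Sym^{n-1} R^1\pi_*\Q) = 0$ for $n > 1$ by Poincar\'e duality, since symmetric powers of the standard $\SL_2$-representation have no monodromy invariants under any finite-index subgroup (which is Zariski dense in $\SL_2$). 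Consequently, only the $(p, q) = (1, n-1)$ term of the spectral sequence survives in the sign isotypic component, yielding the claimed isomorphism in degree $n$ and vanishing in other degrees. The main obstacle is the fiberwise sign computation; in the level-one case this is carried out in \cite{faberconsani}, and since the argument is purely fiberwise it adapts verbatim to the level-$m$ setting.
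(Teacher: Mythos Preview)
Your fiberwise cancellation claim is the concrete gap. You assert that in the $\sym_{n-1}$-sign part, the degree-$n$ piece $\Sym^{n-2}H^1(E)\otimes H^2_c(E\setminus\{O\})$ cancels against contributions from the diagonal arrangement in $(E\setminus\{O\})^{n-1}$. But every closed stratum of that arrangement is of the form $\{p_i=p_j\}$ with $2\le i<j\le n$, on which the transposition $(i\ j)\in\sym_{n-1}$ acts trivially; hence the $\sym_{n-1}$-sign part of the cohomology of the complement is zero and there is nothing to cancel against. Consequently $(R^\bullet f_!\Q)[\sgn_{\sym_{n-1}}]$ is \emph{not} concentrated in degree $n-1$: there is a genuine degree-$n$ summand $\Sym^{n-2}R^1\pi_\ast\Q\,(-1)$, which via $H^1_c$ contributes in total degree $n+1$ (and, for $n=2$, via $H^2_c$ in degree $n+2$). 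Your branching argument at the end would then have to show that this extra contribution lies entirely in the hook $V_{(2,1^{n-2})}$ rather than in $\sgn_{\sym_n}$, but since $\sym_n$ does not act on the fibers of $f$ this cannot be read off fiberwise, and your suggested remedies (``explicit character computation'' or ``comparing two forgetful spectral sequences'') are not spelled out.

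The paper avoids this entirely by a different organization. It first compares $\M_{1,n}(m)$ with the full fibered power $\mathcal{E}^{n-1}$ (allowing $p_i=p_j$ and $p_i=O$), showing via the excision sequence that the $\sym_n$-sign parts agree. The key observation is that on $\mathcal{E}^{n-1}$ the \emph{full} group $\sym_n$ acts fiberwise: a transposition $(1\ j)$ is realized by translation by $-p_j$ on the elliptic curve, which preserves the level structure and hence the fiber. One then applies \cite[Proposition 1]{faberconsani} fiberwise to the genuine $\sym_n$-action on $H^\bullet(E^{n-1})$, obtaining $\Sym^{n-1}H^1(E)$ concentrated in degree $n-1$ directly, with no spurious degree-$n$ term and no need for a separate branching step. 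This fiberwise $\sym_n$-action (via translation) is exactly the missing idea in your approach.
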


\begin{proof} The proof proceeds in two steps. First, one shows that the natural $\sym_n$-equivariant open embedding
\[ \M_{1,n}(m) \hookrightarrow {\mathcal E}^{n-1}\]
into the $(n-1)$:st fibered power of the universal curve induces an isomorphism $H_c^\bullet(\M_{1,n}(m))[\sgn] \iso H_c^\bullet({\mathcal E}^{n-1})[\sgn]$. From the long exact sequence \cite[Corollary 5.51]{peterssteenbrink}
\[ H^\bullet_c(\M_{1,n}(m)) \to H^\bullet_c({\mathcal E}^{n-1}) \to H_c^\bullet(T) \to H^{\bullet+1}_c(\M_{1,n}(m)), \]where $T$ denotes the complement, it suffices to show that the alternating representation does not occur in the cohomology of $T$. This in turn reduces, by the same exact sequence and inclusion-exclusion, to showing this fact for a subspace of ${\mathcal E}^{n-1}$ defined by two or more points coinciding. But on any such subspace there is a transposition acting trivially, showing that the alternating representation cannot occur in its cohomology. 

Secondly, consider the projection map $\sigma: {\mathcal E}^{n-1} \to \M_{1,1}(m)$. The first observation is that the action of $\sym_n$ maps each fiber to itself: this is clear for the subgroup of the last $n-1$ points, and for the first point one also needs to compose with a translation on the elliptic curve. Hence it makes sense to study the $\sym_n$-action on the complex $R^\bullet \sigma_! \Q = R^\bullet \sigma_\ast \Q$. By computing fiberwise it is seen \cite[Proposition 1]{faberconsani} that the alternating part is concentrated in degree $n-1$ and forms a subspace isomorphic to $ \Sym^{n-1} R^1\pi_\ast \Q$. Moreover, it is known that the local system $\Sym^{n-1} R^1\pi_\ast \Q$ (respectively the smooth $\ell$-adic sheaf $\Sym^{n-1} R^1\pi_\ast \Q_\ell$) has nonvanishing cohomology only in degree $1$. The Leray spectral sequence with compact support for $\sigma$ degenerates at the $E_2$ level by Lieberman's trick or alternatively because $\sigma$ is a smooth projective morphism. Thus one finds that the alternating part of the cohomology is concentrated in degree $n$ and isomorphic to $H^1_c(\M_{1,1}(m), \Sym^{n-1} R^1 \pi_\ast \Q)$, as claimed. \end{proof}

\subsection{Contribution from necklaces}

\renewcommand{\X}{\mathcal X}
In this section we study the alternating part of the cohomology of $\M^{{\acyc}}_{1,n}(m)$ and show that $\chi_c(\M^{{\acyc}}_{1,n}(m))[\sgn]$ has weight zero.

A first observation is that there is a surjection
\[ \M^{{\acyc}}_{1,n}(m) \to \M^{{\acyc}}_{1,1}(m) = \partial \M_{1,1}(m)\]
given by forgetting points, with the property that each fiber is mapped to itself under the $\sym_n$-action, and the fibers are permuted transitively by the action of $\SL(2,\Zm)$ on $\MM_{1,n}(m)$. Hence it is sufficient to consider the cohomology of a single fiber over a cusp of $\partial \M_{1,1}(m)$. Let $\X$ be such a fiber. 

Consider a geometric point $\xi$ of $\X$ corresponding to an admissible torsor $P \to C$. Suppose we forget all but one of the marked points of $C$. Then we may stabilize $C$ to a nodal rational curve $C'$, and as in \cite[Proposition 9.1.1]{av02} we get a unique admissible torsor $P' \to C'$ over the stabilized curve. (The previous sentence just describes the forgetting points-morphism.) Then the admissible torsor $P \to C$ is determined up to isomorphism by the admissible torsor $P' \to C'$ --- that is, which cusp of $\MM_{1,1}(m)$ the point $\xi$ maps to --- and the stabilization map $C \to C'$. This fact is easiest to see using twisted curves, where the corresponding torsor $P \to \mathcal{C}$ is simply the pullback of $P' \to \mathcal{C}'$. 

One is tempted to conclude that the admissible torsor $P \to C$ is determined up to isomorphism by $C$ and the point in $\partial \M_{1,1}(m)$ it maps to under stabilization, and hence that the fiber $\X$ has the same coarse moduli space as $\M_{1,n}^{\acyc}$. This is however not true, as in the previous paragraph the map $C \to C'$ had to be included in the data.

\begin{defn} Let $\M_{1,n}^{\cyc}$ denote the moduli space of stable $n$-pointed curves of genus $1$ whose dual graph forms a necklace, equipped with an orientation on the edges of the dual graph inducing a cyclic ordering of the vertices. \end{defn}

\begin{prop} If $m \leq 2$, then the coarse moduli space of $\X$ is the coarse moduli space of $\M_{1,n}^{\acyc}$. If $m \geq 3$,  $\X$ has coarse moduli space $\M_{1,n}^{\cyc}$. \end{prop}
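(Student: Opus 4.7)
The plan is to exhibit a forgetful morphism $\phi \colon \X \to \M_{1,n}^{\acyc}$ sending $(P \to C)$ to $C$, lift it canonically to $\tilde\phi \colon \X \to \M_{1,n}^{\cyc}$ using an orientation on the dual cycle of $C$ intrinsic to $P$, and argue that $\tilde\phi$ is an isomorphism on coarse moduli when $m \geq 3$, while $\phi$ is itself an isomorphism on coarse moduli when $m \leq 2$.

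First I would pin down the geometry of $P$. The ramification index at every node of $C$ is $m$ by \cite[6.1.2]{acv03}; combined with the fact that the stabilization $C \to C'$ to a N\'eron $1$-gon sends $P$ to a N\'eron $m$-gon $P'$ with monodromy of order $m$ at the node, a direct count (each preimage component of a $C_i$ is a degree-$m$ cyclic cover of $\p^1$ branched at its two nodes) shows that $P$ is a N\'eron $km$-gon, where $k$ is the number of components of $C$. The $G$-action factors through a surjection $G \twoheadrightarrow \Z/m$ rotating the dual $km$-cycle, and the intrinsic cyclic orientation of the N\'eron $km$-gon (coming from its $\Gm \times \Z/km$ group structure) descends along $P \to C$ to a cyclic orientation of the dual $k$-cycle of $C$. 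This defines $\tilde\phi$.

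To invert $\tilde\phi$ for $m \geq 3$, I would work with twisted curves. An oriented necklace $C$ determines a morphism of twisted curves $\mathcal{C} \to \mathcal{C}'$ lifting the stabilization, the orientation being used to match the cyclotomic gerbes at the stacky nodes of $\mathcal{C}$ with the unique stacky node of $\mathcal{C}'$. Pulling back the map $\mathcal{C}' \to B(\Zm)^2$ that represents the chosen cusp yields the desired admissible torsor $P \to C$. The only ambiguity in this reconstruction comes from the automorphism group of $P' \to C'$ as an admissible torsor with fixed $G$-action, namely the group $\{\pm 1\}$ generated by inversion on the N\'eron $m$-gon; and $[-1]$ reverses the orientation of $C$. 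Hence on oriented necklaces the construction is canonical, producing an inverse to $\tilde\phi$ and identifying coarse moduli. For $m \leq 2$ the inversion is trivial (since $-1 \equiv 1 \pmod m$), so both orientations of any necklace produce isomorphic torsors and $\phi$ itself is an isomorphism on coarse moduli.

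The main obstacle is the inverse construction: one must check that the twisted-curve pullback depends on the chosen orientation of $C$ in precisely the way claimed, and that the resulting assignment is functorial, not merely a bijection on geometric points. Tracking this via the cyclotomic gerbe structure at twisted nodes is cleaner than arguing directly with admissible covers, since the orientation datum is then encoded intrinsically.
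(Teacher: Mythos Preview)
Your overall strategy --- construct a forgetful map and show it is a bijection on isomorphism classes --- matches the paper's. The paper likewise observes (just before the proposition) that $P \to C$ is determined by the cusp $P' \to C'$ together with the stabilization map $C \to C'$, and then analyzes the ambiguity in that map. Where you diverge is in how you pin down the orientation, and this is where a genuine gap appears.

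You assert that $P$ carries an ``intrinsic cyclic orientation \ldots\ coming from its $\Gm \times \Z/km$ group structure''. But $P$ has no canonical group structure here: a generalized elliptic curve structure requires a choice of identity section on $P$, and the data of an admissible $G$-torsor $P \to C$ provides sections only of $C$, not of $P$ (the torsor has not been trivialized over any marking). Even granting a group structure, the automorphism $x \mapsto x^{-1}$ reverses the cyclic order, so the group law alone does not single out one of the two orientations. Thus your construction of $\tilde\phi$ is not well-defined as written. Relatedly, your description of the automorphism group of $P' \to C'$ as $\{\pm 1\}$ is off: as an object of $\MM_{1,1}(BG)$ its automorphism group is $G$ (translations) for $m \geq 3$, since inversion on $P'$ \emph{anti}-commutes with the $G$-action and hence is not $G$-equivariant. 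For $m \leq 2$ one has $-g = g$, so inversion \emph{is} $G$-equivariant and the automorphism group becomes $G \times \Z/2$.

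This last observation is exactly what the paper exploits, but phrased via $\pi_1$ rather than geometry of $P$. The nontrivial automorphism of $C'$ acts as multiplication by $-1$ on $\pi_1^{\mathrm{tame}}(C' \setminus \{x\}) \cong \widehat\Z$, hence on the class of $P'$ in $\Hom(\pi_1^{\mathrm{tame}}, G)$. That class has order exactly $m$ by \cite[6.1.2]{acv03}, so it is fixed by $-1$ iff $m \leq 2$. This immediately gives: for $m \geq 3$ the two stabilization maps $C \to C'$ (equivalently, the two cyclic orders on $C$) yield non-isomorphic torsors, so $P$ remembers the orientation; for $m \leq 2$ they yield isomorphic torsors. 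No group structure on $P$ is invoked. Your inverse construction via twisted-curve pullback is correct in spirit, but the orientation enters not by ``matching cyclotomic gerbes'' (the stabilization $\mathcal C \to \mathcal C'$ is canonical) --- it enters because the cusp is only an \emph{isomorphism class} of maps $\mathcal C' \to BG$, and choosing a representative leaves a residual $\Aut(C') = \Z/2$ ambiguity in the composite $\mathcal C \to \mathcal C' \to BG$.
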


\begin{proof} Let as before $P \to C$ be a geometric point of $\X$, and $P' \to C'$ the point of $\partial \M_{1,1}(m)$ it maps to under the forgetting points-morphism. 

The nodal rational curve $C'$ has a unique nontrivial automorphism, so given the pointed curve $C$ the map $C \to C'$ is determined up to this involution. Let $x$ be the node of $C'$. Under the isomorphism
\[\pi_1^{\rm tame}(C'\setminus\{{x}\}) \cong \widehat \Z \]
this involution induces multiplication by $-1$. In general an automorphism of $C$ induces the identity on $C'$ or this involution, according to whether the automorphism preserves or reverses the cyclic ordering of the vertices in the necklace. Thus the admissible torsor $P \to C$ is uniquely determined by $C$ precisely if the class of $P'$ in the group
\[\Hom(\pi_1^{\rm tame}(C'\setminus\{x\}),(\Zm)^2) \] is invariant under multiplication by $-1$; otherwise, $P$ is uniquely determined by $C$ once we fix a cyclic ordering of its dual graph. By \cite[6.1.2]{acv03} the class of $P'$ has order $m$ in the group, so it is invariant under $-1$ if and only if $m \leq 2$. 

There is clearly a forgetful morphism from $\X$ to $\M_{1,n}^{\acyc}$ (when $m \leq 2$) respectively $\M_{1,n}^{\cyc}$ (when $m \geq 3$). The discussion above shows that this forgetful morphism is bijective on isomorphism classes. \end{proof}

\begin{rem} While we have an isomorphism of coarse moduli spaces in the previous proposition, the fiber $\X$ is far from being isomorphic to $\M_{1,n}^{\acyc}$ respectively $\M_{1,n}^{\cyc}$. First of all there is the issue of rigidification, but even taking that into account one also has so-called ''ghost automorphisms'' \cite[Section 7]{acv03}. In fact the forgetting points-morphisms are never representable when one works with the spaces $\MM_{g,n}(BG)$. \end{rem}

\begin{rem} An alternative proof of the preceding proposition uses the evaluation maps $\MM_{g,n}(\mathscr X) \to \ri(\mathscr X)$  \cite{abramovich08}. When $\mathscr X = BG$ and $G$ is abelian, the points of $\ri(BG)$ correspond (after a choice of a primitive root) to the elements of $G$, and we can describe the evaluation maps as associating to each marked point the monodromy around the marking of the corresponding admissible torsor. This clarifies the relationship with the computations in $\pi_1^{\rm tame}$ above. One can define evaluation maps not just for markings but also for each branch of a node, and so for every half-edge of the dual graph one gets a decoration by an element of $G$. For instance, Jarvis and Kimura remark that \emph{''the moduli spaces $\MM_{g,n}(BG)$ have boundary strata indexed by stable graphs whose tails and half-edges are decorated by elements of $G$ (up to conjugation)''} in \cite{jarviskimura1}.

It is not hard to show that specifying the decoration of a single half-edge on a necklace determines all of the other decorations, since the product of both elements along an edge should always be the identity in $G$ (otherwise the cover is not admissible), and the product of all elements incident to a genus zero component should also be the identity (by a computation in the [\'etale] fundamental group). By \cite[6.1.2]{acv03} all half-edges of a stratum in $\M^{\acyc}_{1,n}(m)$ are decorated by an element of order $m$, so this decoration is invariant under the dihedral symmetry if and only if $m \leq 2$.  \end{rem}

We now study the two generating series 
\[ \sum_{n=1}^\infty \chi_c(\M^{{\acyc}}_{1,n})[\sgn]t^n \in K_0({\rm MHS}_\Q) \otimes \Q[[t]] \]
and
\[ \sum_{n=1}^\infty \chi_c(\M^{{\cyc}}_{1,n})[\sgn]t^n \in K_0({\rm MHS}_\Q) \otimes \Q[[t]] \]
of compactly supported Euler characteristics, taken in the Grothendieck ring of rational mixed Hodge structures. The reason for passing to Euler characteristics is that we want to compute with each stratum separately, and this is possible since the compactly supported Euler characteristic satisfies the scissor relation
\[ \chi_c(X\setminus Y) =\chi_c(X) - \chi_c(Y) \]
for $Y \subseteq X$ constructible, as well as the usual K\"unneth formula. 

\begin{rem} Given a dual graph $\Gamma$ corresponding to a stratum in $\MM_{g,n}$, the cohomology of the stratum is given by
\begin{equation} \label{cohstrat} \bigotimes_{v \in \rm{Vert} \Gamma} H^\bullet(\M_{g(v),n(v)})_{\Aut \Gamma }\end{equation}
where the subscript denotes coinvariants with respect to the group action. In the case of $\M_{1,n}^{\cyc}$ we must replace $\Aut \Gamma$ with the cyclic subgroup of index two that preserves the cyclic ordering. \end{rem}

The following is proven in \cite{faberconsani}:

\begin{prop} \label{op1} The Euler characteristic $\chi_c(\M^{{\acyc}}_{1,n})[\sgn]$ is pure of weight zero.
\end{prop}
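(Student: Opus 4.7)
The approach I would take follows the pattern of \cite{faberconsani}. First I would stratify $\M^{\acyc}_{1,n}$ by the topological type of the dual graph. A necklace $\Gamma$ of $N$ genus-zero vertices, with $k_i \geq 1$ labeled markings at vertex $v_i$ (and $\sum_i k_i = n$), defines an open stratum
$$\mathcal{S}_\Gamma \cong \Big(\prod_{i=1}^N \M_{0, k_i + 2}\Big)\Big/\Aut(\Gamma),$$
where $\Aut(\Gamma)$ is the subgroup of the dihedral group $D_{2N}$ respecting the labeling. By the scissor relation and the K\"unneth formula,
$$\chi_c(\M^{\acyc}_{1,n}) = \sum_\Gamma \chi_c(\mathcal{S}_\Gamma) \in K_0(\mathrm{MHS}_\Q),$$
and the alternating projection commutes with this sum. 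Hence the task reduces to understanding the sign-isotypic weight structure of each stratum.

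Next I would pass to the symmetric function formalism. Encoding the equivariant Euler characteristics of the genus-zero moduli into a single generating series $F = \sum_{k \geq 1} \chi_c^{\sym_k}(\M_{0, k+2}) \in K_0(\mathrm{MHS}_\Q) \otimes \Lambda$ (with two of the $k+2$ marked points kept distinguished as the half-edges), the contributions over all $N$ and stratifications assemble into a cyclic composition of $F$, symmetrized by the $\Z/2$ orientation reversal. Taking the alternating part is then equivalent to pairing with $e_n = s_{(1^n)}$, which by Frobenius reciprocity is the product of sign characters on the parabolic subgroup $\prod_i \sym_{k_i} \leq \sym_n$ acting on each factor $\M_{0, k_i + 2}$.

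The key input is that each $H^\bullet_c(\M_{0, k+2})$ is a mixed Tate Hodge structure whose $\sym_k$-equivariant character is explicitly known (e.g.\ from Getzler's formulas for $\chi_c^{\sym_k}(\M_{0,k+2})$). The main obstacle will be showing that after pairing each factor with $\sgn$ and summing over all necklace strata, every Tate twist $\Q(-j)$ with $j > 0$ cancels, so that only weight-zero contributions survive. I expect this to follow from a combinatorial identity in $K_0(\mathrm{MHS}_\Q) \otimes \Lambda$: roughly, the plethystic inverse of the alternating generating series of $\chi_c(\M_{0, \bullet})$, when inserted into the cyclic composition, telescopes to a purely weight-zero expression. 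Alternatively, one could proceed by induction on $n$, exploiting the forgetful fibrations among the $\M_{0,k+2}$ to reduce to low-dimensional base cases where the cancellation is verified by hand.
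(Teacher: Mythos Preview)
Your proposal follows essentially the same route as the paper, which is simply outlining the proof from \cite{faberconsani}: stratify $\M^{\acyc}_{1,n}$ by necklace graphs, package the equivariant Euler characteristics into a symmetric-function generating series (the paper uses Getzler's semiclassical approximation to write this explicitly as a plethystic expression in $\M = \sum_n \chi_c(\M_{0,n})$), and then invoke Getzler's description of $H^\bullet_c(\M_{0,n})$ as an $\sym_n$-representation to isolate the alternating part.

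The one place where your sketch is less sharp than the actual argument is the endgame. You frame the final step as a \emph{cancellation} of positive-weight Tate pieces across strata, and propose either a telescoping plethystic identity or an induction on $n$. Consani--Faber's argument (their Lemmas 7 and 8, which the present paper cites for the parallel cyclic case) is more direct: once the semiclassical formula is in hand, one shows that only the top compactly supported cohomology $H^{n-3}_c(\M_{0,n})$, which is pure of weight zero, can contribute to the $\sgn$-isotypic piece at all. No cancellation is needed. Your ``plethystic inverse telescopes'' suggestion is vague as stated, and the inductive alternative via forgetful fibrations would be awkward to set up here since the necklace stratification does not behave simply under forgetting points. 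If you rewrite the last paragraph to say that the representation-theoretic constraints on $H^\bullet_c(\M_{0,n})$ force the non-top-degree pieces to miss $\sgn$, your outline matches the paper's exactly.
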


We outline their proof, which uses the theory of modular operads. Let $\Lambda = \bigoplus_n \Lambda^n$ be the ring of symmetric functions. We will not distinguish between $\Lambda^n$ and the representation ring of $\sym_n$, nor between a (virtual) $\sym_n$-representation and a symmetric function of degree $n$.  First the generating function
\[\sum_{n=1}^\infty \chi_c(\M^{{\acyc}}_{1,n}) \in \Lambda \widehat\otimes K_0(\rm{MHS}_\Q)\]
is studied. This sum is naturally interpreted as a sum over graphs, and the so-called semiclassical approximation provides an explicit expression for it. Let $\M$ be the generating function 
\[ \sum_{n=3}^\infty \chi_c(\M_{0,n}) \in \Lambda \widehat\otimes  K_0( {\rm MHS}_\Q). \]
It follows from the main theorem of \cite{semiclassical} (the semi-classical approximation) that
\[ \sum_{n=1}^\infty \chi_c(\M^{{\acyc}}_{1,n}) =\left( -\frac 1 2 \sum_{n=1}^\infty \frac{\phi(n)}{n}\log(1-p_n) \right) \circ \frac{\partial^2 \M}{\partial p_1^2}  + \frac{\frac{\partial \M}{\partial p_2}(2+ \frac{\partial \M}{\partial p_2}) + \frac{\partial^2 \M}{\partial p_1^2}}{4(1-p_2 \circ \frac{\partial^2 \M}{\partial p_1^2})},\]
where $\circ$ denotes plethysm of symmetric functions. Consani and Faber proceed 
to use results of Getzler on the structure of $H^\ast(\M_{0,n})$ as an $\sym_n$-representation to show that only the top-degree cohomology $H^{n-3}_c(\M_{0,n})$, which is pure of weight zero, can contribute nontrivially to the alternating part. 

\begin{rem}In their proof they actually compute an exact expression for $\chi_c(\M^{{\acyc}}_{1,n})[\sgn]$. However, we prefer to deduce this from Theorem \ref{mainthm}. \end{rem}

We claim that similarly
\begin{equation}\label{gummiboll}\sum_{n=1}^\infty \chi_c(\M^{{\cyc}}_{1,n}) = \left( -\sum_{n=1}^\infty \frac{\phi(n)}{n}\log(1-p_n) \right) \circ \frac{\partial^2\M}{\partial p_1^2}.\end{equation}


The author has a combinatorial proof of Getzler's semiclassical approximation which uses wreath product symmetric functions to directly sum over necklaces up to dihedral symmetry. In this proof, the first term of Getzler's formula corresponds to symmetries under rotation, and the second term corresponds to symmetries under reflections. Then the above formulas show that going from dihedral to cyclic symmetry corresponds to discarding all reflection terms (and multiplying by two). This combinatorial proof will be given separately in \cite{semiclassicalremark} since it is not really needed for the purposes of this article. However we give a direct proof of this last formula using little more than the definition of a plethysm of $\sym$-modules. 

\providecommand{\V}{\mathcal{V}}
\providecommand{\W}{\mathcal{W}}
\newcommand{\Ass}{\mathcal{A}ss}
\newcommand{\Alt}{\mathrm{Alt}}

\begin{defn} Let $A$ be a ring. An $\sym$-module $\mathcal V$ over $A$ is the data of an $A[\sym_n]$-module ${\mathcal V}(n)$ for each positive integer $n$.  \end{defn}

\begin{defn} Let $\V$ and $\W$ be $\sym$-modules over $A$. We define their direct sum $\V \oplus \W$ componentwise. We define their tensor product by
\[ (\V \otimes \W)(n) = \bigoplus_{k+l =n} \mathrm{Ind}_{\sym_k \times \sym_l}^{\sym_n} \V(k) \otimes \W(l).\] This makes the category of $\sym$-modules a symmetric monoidal category.  \end{defn}

\begin{defn} Let $\V$ and $\W$ be $\sym$-modules over $A$. The plethysm $\V \circ \W$ is defined by
\begin{equation} \label{plethysm} (\V \circ \W)(n) = \bigoplus_{k=1}^\infty \V(k) \otimes_{A[\sym_k]} (\W^{\otimes k}) (n) \end{equation}
where $(\W^{\otimes k}) (n)$ is considered as an $\sym_k$-module by permuting the factors. \end{defn}

 Let $A$ be the ring $K_0({\rm MHS}_\Q)$. Let $\M$ be the $\sym$-module defined by 
 \[ \M(n) = \chi_c(\M_{0,n})\]
 for $n\geq 3$ and $\M(n)=0$ otherwise. Note that 
\[\left( \frac{\partial^2 \M}{\partial p_1^2}\right)(n) = \mathrm{Res}^{\sym_{n+2}}_{\sym_n} \, \chi_c(\M_{0,n+2}) \]
for $n \geq 1$. Let moreover $\Ass$ denote the $\sym$-module defined by
\[ \Ass (n) = \mathrm{Ind}_{\Z/n\Z}^{\sym_n}\, \mathbf{1}. \] 

\begin{prop} There is an equality \[ \left(\Ass \circ  \frac{\partial^2\M}{\partial p_1^2}\right)(n) = \chi_c(\M_{1,n}^{\cyc}).\]
\end{prop}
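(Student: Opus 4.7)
The plan is to identify the $k$-th term in the plethystic expansion
\[\left(\Ass \circ \V\right)(n) = \bigoplus_{k=1}^{\infty} \Ass(k) \otimes_{\sym_k} \V^{\otimes k}(n), \qquad \V := \frac{\partial^2 \M}{\partial p_1^2},\]
with the contribution of the stratum of $\M_{1,n}^{\cyc}$ whose dual graph is an oriented necklace on exactly $k$ vertices. By the scissor relation for compactly supported Euler characteristic,
\[\chi_c(\M_{1,n}^{\cyc}) = \sum_{k \geq 1} \chi_c(\M_{1,n}^{\cyc,k}),\]
so it suffices to prove the equality $\chi_c(\M_{1,n}^{\cyc,k}) = \Ass(k) \otimes_{\sym_k} \V^{\otimes k}(n)$ one stratum at a time.

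For fixed $k$, I would describe $\chi_c(\M_{1,n}^{\cyc,k})$ by first picking a starting vertex, converting the cyclic ordering into a linear one. A point of the stratum then becomes an ordered $k$-tuple of genus-zero stable components; the $i$-th component carries $n_i \geq 1$ of the labeled markings together with the two half-edges coming from its incident nodes. The orientation of the necklace canonically designates one of these two half-edges as ``outgoing'' (to the next vertex) and the other as ``incoming'' (from the preceding vertex), so the $i$-th component is naturally a point of $\M_{0,n_i+2}$ with the last two markings labeled, contributing $\V(n_i)$. Summing over compositions $n_1 + \cdots + n_k = n$, the contribution of all linearly-ordered $k$-tuples is exactly $\V^{\otimes k}(n)$ by the definition of the symmetric monoidal structure on $\sym$-modules. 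Forgetting the choice of starting vertex amounts to taking coinvariants under the $\Z/k\Z$-action by rotation of the linear ordering, so $\chi_c(\M_{1,n}^{\cyc,k}) = (\V^{\otimes k}(n))_{\Z/k\Z}$.

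To conclude, I would use the standard Frobenius reciprocity identity
\[\Ass(k) \otimes_{\sym_k} \V^{\otimes k}(n) = (\mathrm{Ind}_{\Z/k\Z}^{\sym_k} \mathbf{1}) \otimes_{\sym_k} \V^{\otimes k}(n) = (\V^{\otimes k}(n))_{\Z/k\Z},\]
matching the expression for the $k$-th stratum; summing over $k$ gives the desired identity.

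The main obstacle to be careful about is that no residual reflection symmetry contributes, in contrast to the situation for $\M_{1,n}^{\acyc}$. For $k \geq 2$ the two nodes incident to a given vertex are distinct, and the cyclic orientation canonically labels them as incoming and outgoing; for $k = 1$ one has a single genus-zero component with a self-loop, but the two branches of that loop are again distinguished by the chosen orientation of the loop, which is part of the $\cyc$-data. Consequently the stabilizer of a point of the product $\prod_i \M_{0,n_i+2}$ inside the dihedral automorphism group of the necklace is exactly the cyclic subgroup, yielding the pure cyclic coinvariants and matching the plethysm on the nose. This is the precise source of the discrepancy between formula (\ref{gummiboll}) and the corresponding expression for $\M_{1,n}^{\acyc}$ in the semi-classical approximation, where the reflection terms are no longer suppressed.
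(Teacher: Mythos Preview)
Your proposal is correct and follows essentially the same approach as the paper: both arguments expand the plethysm termwise, use the identity $\mathrm{Ind}_{\Z/k\Z}^{\sym_k}\mathbf{1}\otimes_{\sym_k}\V^{\otimes k}(n)=(\V^{\otimes k}(n))_{\Z/k\Z}$, and identify this with the stratum contribution via the description~\eqref{cohstrat} (with $\Aut\Gamma$ replaced by its cyclic subgroup). Your treatment is slightly more explicit about the linearization and the $k=1$ self-loop case, but the underlying argument is the same.
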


\begin{proof} For any $\sym_n$-module $M$ and any subgroup $H$ of $\sym_n$,
\[ \mathrm{Ind}_{H}^{\sym_n}\, \mathbf{1} \otimes_{A[\sym_n]} M = \mathbf{1} \otimes_{A[H]} M= M_{H}, \]
the coinvariants under $H$. Also,  
\[ \left(\frac{\partial^2\M}{\partial p_1^2}\right)^{\otimes k}(n) = \bigoplus_{n_1 + \ldots + n_k = n}  \bigotimes_{i=1}^k \chi_c( \M_{0,n_i+2})\]
by the additivity and multiplicativity of the Euler characteristic. Hence we are done by comparing equations \eqref{cohstrat} and \eqref{plethysm}. \end{proof}

\begin{prop}There is an equality of generating series
\[\sum_{n=1}^\infty \Ass(n) = -\sum_{n=1}^\infty \frac{\phi(n)}{n}\log(1-p_n).\] \end{prop}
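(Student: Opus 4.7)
The plan is to compute the Frobenius characteristic of each $\Ass(n)$ explicitly as a symmetric function in the $p_n$'s, and then recognize the resulting double sum as the logarithmic series on the right-hand side.

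First I would invoke the standard formula for the Frobenius characteristic of a representation induced from a subgroup: if $H \leq \sym_n$, then
\[
\mathrm{ch}\bigl(\mathrm{Ind}_H^{\sym_n} \mathbf{1}\bigr) \;=\; \frac{1}{|H|} \sum_{h \in H} p_{\lambda(h)},
\]
where $\lambda(h)$ is the cycle type of $h$ and $p_\lambda = p_{\lambda_1} p_{\lambda_2} \cdots$. Applying this with $H = \Z/n\Z$ realized as the cyclic group generated by an $n$-cycle $\sigma \in \sym_n$, the element $\sigma^k$ has cycle type consisting of $\gcd(n,k)$ cycles each of length $n/\gcd(n,k)$, so its contribution to the sum is $p_{n/\gcd(n,k)}^{\gcd(n,k)}$.

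Next, I would group the sum by the value of $d = \gcd(n,k)$. For each divisor $d$ of $n$, the number of $k \in \{0,1,\dots,n-1\}$ with $\gcd(n,k)=d$ is $\phi(n/d)$. Therefore
\[
\Ass(n) \;=\; \frac{1}{n} \sum_{d \mid n} \phi(n/d)\, p_{n/d}^{\,d} \;=\; \frac{1}{n} \sum_{e \mid n} \phi(e)\, p_e^{\,n/e},
\]
after the reindexing $e = n/d$.

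Finally, summing over $n$ and switching the order of summation by writing $n = ek$:
\[
\sum_{n=1}^\infty \Ass(n) \;=\; \sum_{e=1}^\infty \sum_{k=1}^\infty \frac{\phi(e)}{ek}\, p_e^{\,k} \;=\; \sum_{e=1}^\infty \frac{\phi(e)}{e} \sum_{k=1}^\infty \frac{p_e^{\,k}}{k} \;=\; -\sum_{e=1}^\infty \frac{\phi(e)}{e}\log(1 - p_e),
\]
which is exactly the desired identity. There isn't really a hard step here: the entire argument is a character-theoretic bookkeeping exercise combined with the elementary identity $\sum_{d \mid n} \phi(d) = n$ (or equivalently its dual form). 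The only point requiring care is the correct identification of the cycle type of $\sigma^k$ when $\sigma$ is an $n$-cycle, which is classical.
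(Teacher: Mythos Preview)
Your proof is correct. The paper's own proof is simply a citation to Macdonald (Chapter~1, \S7, Example~4) and to Getzler--Kapranov; what you have written is precisely the explicit cycle-index computation contained in those references, so your argument is the same one, just unpacked rather than cited.
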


\begin{proof}Use the results of \cite[Chapter 1, Section 7, Example 4]{macdonald}. See also \cite[Example 7.6.2]{getzlerkapranov}. \end{proof}

\begin{prop} \label{op2}The alternating part of $\chi_c(\M^{{\cyc}}_{1,n})$ is pure of weight zero.
\end{prop}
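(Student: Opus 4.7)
The plan is to mirror the argument Consani and Faber used for Proposition \ref{op1}, applied to the simpler plethystic identity \eqref{gummiboll}.

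First I would expand the plethysm explicitly. Using $\Ass(k) = \mathrm{Ind}_{\Z/k\Z}^{\sym_k}\mathbf{1}$ together with the definition of plethysm for $\sym$-modules, equation \eqref{gummiboll} expresses $\chi_c(\M^{\cyc}_{1,n})$ as a direct sum, indexed by integers $k\geq 1$ and compositions $n_1+\ldots+n_k=n$ (up to cyclic rotation), of $\sym_n$-representations of the form
\[\mathrm{Ind}_{(\sym_{n_1}\times\cdots\times\sym_{n_k})\rtimes \Z/k\Z}^{\sym_n}\Bigl(\bigotimes_{i=1}^k\chi_c(\M_{0,n_i+2})\Bigr),\]
with the factor $\Z/k\Z$ acting by cyclically permuting the tensor factors (on isomorphic ones).

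Next I would extract the sign isotypic. By Frobenius reciprocity, the multiplicity of $\sgn$ in such an induced module equals the multiplicity of $\mathrm{Res}\,\sgn$ in the tensor product, and $\sgn$ restricted to $\sym_{n_1}\times\cdots\times\sym_{n_k}$ is the external tensor product of signs. Hence only the alternating parts of the individual $\chi_c(\M_{0,n_i+2})$ can contribute nontrivially to $\chi_c(\M^{\cyc}_{1,n})[\sgn]$.

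The key input is the observation used by Consani and Faber in their proof of Proposition \ref{op1}: by Getzler's description of the $\sym$-equivariant cohomology of $\M_{0,n}$, the alternating part of $\chi_c(\M_{0,n_i+2})$ is pure of weight zero (a class of Tate type). Since induction, tensor product and coinvariants under a finite group all preserve pure weight zero Hodge structures, the alternating part of $\chi_c(\M^{\cyc}_{1,n})$ is pure of weight zero.

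The main point requiring care is the $\Z/k\Z$-equivariance intrinsic to the cyclic (rather than dihedral) setting, particularly when the composition $(n_1,\ldots,n_k)$ admits a nontrivial rotational symmetry. However, finite-group coinvariants of a pure weight zero Hodge structure are again pure weight zero, so this presents no real obstacle; indeed, the cyclic case is in this respect simpler than the dihedral one, as we avoid the delicate reflection terms appearing in Consani and Faber's semiclassical analysis.
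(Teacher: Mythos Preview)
Your proposal is correct and takes essentially the same approach as the paper. The paper's proof simply observes that the right-hand side of \eqref{gummiboll} is (up to a factor of two) the first term of Getzler's semiclassical formula, so that Lemmas~7 and~8 of Consani--Faber apply verbatim; your argument is a sketch of what those lemmas actually do---expand the plethysm, apply Frobenius reciprocity to reduce to the $\sgn_{n_i}$-isotypic of $\mathrm{Res}_{\sym_{n_i}}\chi_c(\M_{0,n_i+2})$, and use that this piece sits in top degree and hence has weight zero.
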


\begin{proof}The alternating part of the right hand side of equation \eqref{gummiboll} is shown to have weight zero in  \cite[Lemma 7 and 8]{faberconsani}.
\end{proof}

\begin{rem} The notions of $\sym$-modules and plethysm arise naturally when studying operads, an operad being exactly a monoid in the category of $\sym$-modules with respect to plethysm. In this context the definition of plethysm can be understood graphically. Namely, if $\V$ is an operad, one often thinks of $\V(n)$ as spanned by graphs with one output leg and $n$ input legs, with the $\sym_n$-action corresponding to permutation of the inputs. Then $\V \circ \W$ corresponds to attaching the output legs of the graphs corresponding to $\W$ to the input legs of the graphs corresponding to $\V$ in all possible ways. 

This way of thinking also makes the previous propositions more or less trivial. By our definition of $\Ass(n)$, we can think of it as the $\sym_n$-module spanned by necklaces considered up to cyclic symmetry. To make this picture more operadic, we can replace necklaces by corollas (single vertices with several input legs) by placing a vertex in the middle of the necklace and drawing an input leg from each node of the necklace to the vertex in the middle. Then $\Ass(n)$ is spanned by corollas with $n$ cyclically ordered input legs, or equivalently, by corollas equipped with an embedding in the plane up to ambient isotopy. (In terms of operads, $\Ass$ is the underlying $\sym$-module of the cyclic associative operad shifted by one.)

Also, $\M(n)$ is given by marking $n+2$ points on $\p^1$ and then fixing two of them, so in terms of dual graphs there are two fixed legs and the remaining are permuted by the $\sym_n$-action. To attach $n$ legs at a node of a necklace we need to have $n+2$ marked points on $\p^1$, where the last two vertices are those which we glue to the incident components. Note that the last two vertices are naturally ordered as one is attached to the edge incident in the clockwise direction of the dual graph, and the other is attached counterclockwise.

So $\Ass \circ \M$ is  the result of attaching extra legs to a single cyclically oriented corolla, or equivalently a necklace, which is exactly what we want.  \end{rem}

\subsection{The remaining strata}

\begin{prop} \label{yo} The alternating representation does not occur in the cohomology of any stratum where a non-empty forest of genus zero vertices has been attached to a stable dual graph. \end{prop}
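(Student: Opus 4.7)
The plan is to reduce the statement to a purely combinatorial fact about the $\sym_n$-action on $H^\bullet(\M_{0,n})$. I would first choose a leaf $v_0$ of the forest, i.e.\ a genus-zero vertex incident to exactly one edge of the dual graph. Stability forces $n(v_0)\geq 3$, so $v_0$ carries at least two marked points; write $S\subseteq \{1,\dots,n\}$ for this set of markings and set $k = |S|\geq 2$. By the K\"unneth-type formula recalled in the preceding remark, the cohomology of the stratum (before taking $\Aut(\Gamma)$-coinvariants) is $\bigotimes_v H^\bullet(\M_{g(v),n(v)})$. The subgroup $\sym_S\subseteq \sym_n$ stabilizes the stratum and acts trivially on every tensor factor except $H^\bullet(\M_{0,k+1})$, where it permutes $k$ of the $k+1$ marked points (the remaining one being the node of the leaf). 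The alternating character of $\sym_n$ restricts to the sign character of $\sym_S$, and passing to $\Aut(\Gamma)$-coinvariants can only destroy sign-isotypic components, not create them. So it is enough to show that the sign representation of $\sym_k$ does not appear in $H^\bullet(\M_{0,k+1})$ restricted to $\sym_k$.

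Setting $n=k+1\geq 3$, by the branching rule the sign representation of $\sym_{n-1}$ is obtained by corner-deletion from exactly two irreducibles of $\sym_n$, namely the ``column-heavy'' partitions $(1^n)$ and $(2,1^{n-2})$. So the problem reduces to showing that for all $n\geq 3$, neither $(1^n)$ nor $(2,1^{n-2})$ appears in $H^\bullet(\M_{0,n})$ as an $\sym_n$-representation. I would prove this by induction on $n$; the base case $n=3$ is trivial since $\M_{0,3}$ is a point. For the inductive step, consider the forgetful morphism $\pi\colon \M_{0,n}\to \M_{0,n-1}$, whose fibers are $\p^1$ minus $n-1$ points. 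Because the punctures of the fiber are individually labeled and hence each preserved by any loop in the base, the local system $R^1\pi_\ast \Q$ has trivial monodromy and equals the constant local system with fiber the standard representation $\mathrm{std}$ of $\sym_{n-1}$. From the Leray spectral sequence, $H^\bullet(\M_{0,n})|_{\sym_{n-1}}$ is a subquotient of
\[
H^\bullet(\M_{0,n-1}) \,\oplus\, H^{\bullet-1}(\M_{0,n-1})\otimes \mathrm{std},
\]
and multiplicities of $\sym_{n-1}$-representations can only decrease upon passing to subquotients. Using the identity $\sgn\otimes \mathrm{std} = (2,1^{n-3})$ in $\sym_{n-1}$, the multiplicity of $(1^{n-1})$ in the right-hand side equals the sum of the multiplicities of $(1^{n-1})$ and $(2,1^{n-3})$ in $H^\bullet(\M_{0,n-1})$, both zero by the inductive hypothesis. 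But by branching in the other direction, the multiplicity of $(1^{n-1})$ in $H^\bullet(\M_{0,n})|_{\sym_{n-1}}$ also equals $m_{(1^n)}+m_{(2,1^{n-2})}$ in $H^\bullet(\M_{0,n})$, so both multiplicities vanish, completing the induction.

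The main obstacle I expect is keeping the combinatorics straight through the branching rule and the tensor identity; once one fixes the pair of ``bad'' partitions $\{(1^n),(2,1^{n-2})\}$ as the induction hypothesis, the Leray analysis closes it exactly, precisely because the fiber's cohomology is $\Q\oplus \mathrm{std}$. The geometric input — triviality of the monodromy of $R^1\pi_\ast \Q$, and the fact that an upper bound on multiplicities from $E_2$ suffices — is minimal and in particular requires no delicate argument about degeneration of the Leray spectral sequence.
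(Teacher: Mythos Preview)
Your argument is correct and follows the same overall strategy as the paper: isolate an extremal (leaf) vertex $v$ of the attached forest, use the product decomposition $\M(\Gamma)\cong \M(\Gamma')\times \M_{0,N}$, and then reduce via Frobenius reciprocity to a statement about which $\sym_N$-representations occur in $H^\bullet(\M_{0,N})$. The difference lies in how that last representation-theoretic input is supplied. The paper simply cites \cite[Lemma~5]{faberconsani}, which ultimately rests on Getzler's description of $H^\bullet(\M_{0,n})$ as an $\sym_n$-module, and then invokes the Frobenius reciprocity step from \cite[Lemma~6]{faberconsani}. You instead prove directly, by induction via the Leray spectral sequence for the forgetful map $\M_{0,n}\to\M_{0,n-1}$, that neither $(1^n)$ nor $(2,1^{n-2})$ occurs in $H^\bullet(\M_{0,n})$; the branching rule and the identity $\sgn\otimes\mathrm{std}=(2,1^{n-3})$ close the induction exactly. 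This makes your proof self-contained and avoids importing Getzler's full structural result, at the cost of a short extra argument. One small point worth stating more carefully: your sentence about $\Aut(\Gamma)$-coinvariants is really the observation that $\sym_S$ lies in the stabilizer of the stratum in $\sym_n$, so by Frobenius reciprocity it suffices to show $\sgn|_{\sym_S}$ does not occur in the cohomology of a single stratum; this is what your computation establishes.
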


\begin{proof} Let $\Gamma$ be the dual graph of such a stratum of $\MM_{1,n}(m)$. Let $v$ be an extremal vertex of one of the attached trees, say with $N$ incident half-edges. Let $\Gamma'$ be the dual graph given by deleting $v$ and these half-edges. The graph $\Gamma'$ defines a stratum of $\MM_{1,n-N+1}(m)$. If we let $\M(\Gamma)$ and $\M(\Gamma')$ denote the corresponding strata, then there is an isomorphism
\[ \M(\Gamma) \to \M(\Gamma') \times \M_{0,N}. \]
The morphism $\M(\Gamma) \to \M(\Gamma')$ is the morphism that forgets all the points on the component $v$, and the morphism to $\M_{0,N}$ remembers only the markings on the component. This is well-defined since there is at least one marked point on $v$, so no automorphism of the dual graph could interchange the component with any other. To define an inverse, use that any admissible torsor over the component $v$ is necessarily \'etale, and since $\p^1$ is simply connected the torsor is necessarily trivial. Hence given an admissible torsor corresponding to an object of $\M(\Gamma')$ there is a unique extension to an admissible torsor over the attached component $v$ (corresponding to an object of $\M_{0,N}$).

It follows in particular that 
\[H^\bullet(\MM(\Gamma)) = H^\bullet (\MM(\Gamma'))\otimes H^\bullet(\M_{0,N}). \]
Now \cite[Lemma 5]{faberconsani} describes which $\sym_N$-representations may occur in the cohomology of $\M_{0,N}$, and the fact that the alternating representation cannot occur in the left hand side follows by the same Frobenius reciprocity argument as in \cite[Lemma 6]{faberconsani}. \end{proof}

\section{Cusp form motives}

\renewcommand{\V}{\mathbb{V}}
Let $\V_n$ denote the local system $\Sym^{n} R^1\pi_\ast \Q$ on $\M_{1,1}(m)$, where $\pi$ is the projection from the universal elliptic curve. Consider for all $n$ the natural morphisms
\[H^1_c(\M_{1,1}(m),\V_n) \to H^1(\M_{1,1}(m),\V_n).\]
We define the \emph{parabolic cohomology} to be the image of this morphism, and the \emph{Eisenstein cohomology} to be the kernel. The parabolic cohomology is denoted $H^1_!(\M_{1,1}(m),\V_n)$. The weight filtration on $H^1_c(\M_{1,1}(m),\V_n)$ has only two steps,
\[ 0 \subset W_0 \subset W_{n+1} = H^1_c(\M_{1,1}(m),\V_n),\]
where $W_0$ is the Eisenstein cohomology and $W_{n+1}/W_0$ is isomorphic to the parabolic cohomology.  See for instance \cite[\S 4]{faltings87}.

\begin{thm} \label{mainthm} The alternating part of the cohomology of $\MM_{1,n}(m)$ is pure of weight $n$ and coincides with the parabolic cohomology groups 
\[ H^1_!(\M_{1,1}(m), \Sym^{n-1} R^1\pi_\ast \Q)\]
and $H^1_!(\MM_{1,1}(m),\Sym^{n-1} R^1\pi_\ast \Q_\ell)$ in Betti and $\ell$-adic cohomology respectively.\end{thm}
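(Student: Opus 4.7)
The strategy is to apply the open--closed localization long exact sequence to $U := \M_{1,n}(m) \hookrightarrow \MM_{1,n}(m) \hookleftarrow Z := \MM_{1,n}(m)\setminus U$ and, using that $\MM_{1,n}(m)$ is smooth and proper, cut out the parabolic cohomology by a weight argument. Since we work with characteristic-zero coefficients, taking the alternating part with respect to $\sym_n$ is exact, so the sequence yields
\[ \cdots \to H^{i-1}(Z)[\sgn] \to H^i_c(U)[\sgn] \to H^i(\MM_{1,n}(m))[\sgn] \to H^i(Z)[\sgn] \to \cdots. \]
By Proposition~\ref{alternating}, $H^i_c(U)[\sgn]$ vanishes for $i\neq n$ and equals $H^1_c(\M_{1,1}(m),\V_{n-1})$ for $i=n$, where $\V_{n-1}:=\Sym^{n-1}R^1\pi_\ast\Q$. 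This induces a canonical map
\[ \phi\colon H^1_c(\M_{1,1}(m),\V_{n-1}) \to H^n(\MM_{1,n}(m))[\sgn]. \]

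Smoothness and properness of $\MM_{1,n}(m)$ make $H^n(\MM_{1,n}(m))[\sgn]$ pure of weight $n$, while $H^1_c(\M_{1,1}(m),\V_{n-1})$ is an extension of the pure weight-$n$ parabolic part $H^1_!$ by the pure weight-$0$ Eisenstein part $W_0$. Strictness of morphisms of mixed Hodge structures (respectively Deligne's weight theory in the $\ell$-adic setting) forces $\phi$ to annihilate $W_0$, so $\phi$ factors through $H^1_!$. Moreover, the kernel of $\phi$ is the image of the connecting map from $H^{n-1}(Z)[\sgn]$; since $Z$ is proper this image has weights at most $n-1$ and is therefore contained in $W_0$ by the same strictness argument. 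Combining, the induced map
\[ \bar\phi\colon H^1_!(\M_{1,1}(m),\V_{n-1}) \hookrightarrow H^n(\MM_{1,n}(m))[\sgn] \]
is injective.

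To match dimensions and control other cohomological degrees, compute Euler characteristics in $K_0(\mathrm{MHS}_\Q)$. Stratify $Z = Z_1 \sqcup Z_2$ with $Z_2 = \M^{\acyc}_{1,n}(m)$ the necklace locus and $Z_1$ the union of remaining strata. Proposition~\ref{yo} together with additivity of $\chi_c$ over the stratification of $Z_1$ gives $\chi_c(Z_1)[\sgn] = 0$. The fibers of $Z_2 \to \partial\M_{1,1}(m)$ have coarse moduli spaces identified with $\M^{\acyc}_{1,n}$ (if $m \leq 2$) or $\M^{\cyc}_{1,n}$ (if $m \geq 3$), so Propositions~\ref{op1} and~\ref{op2} give that $\chi_c(Z_2)[\sgn]$ is pure of weight zero. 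Together with Proposition~\ref{alternating} this yields
\[ \chi_c(\MM_{1,n}(m))[\sgn] = (-1)^n\bigl([W_0] + [H^1_!]\bigr) + \chi_c(Z_2)[\sgn]. \]
Since $H^i(\MM_{1,n}(m))[\sgn]$ is pure of weight $i$, separating by weight forces $\dim H^n(\MM_{1,n}(m))[\sgn] = \dim H^1_!$ and $H^w(\MM_{1,n}(m))[\sgn] = 0$ for $w \notin \{0,n\}$, while $H^0(\MM_{1,n}(m))[\sgn]$ vanishes for $n\geq 2$ since $\sym_n$ acts trivially on $\pi_0(\MM_{1,n}(m))$. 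Hence $\bar\phi$ is an injection between pure weight-$n$ vector spaces of the same dimension, and therefore an isomorphism.

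The main obstacle is not the localization step itself but the weight-zero analysis of $\chi_c(Z_2)[\sgn]$: only the alternating Euler characteristic, not each individual $H^i_c$, is controlled by the operadic formulas in Propositions~\ref{op1} and~\ref{op2}, and we must lean on the purity of $H^\bullet(\MM_{1,n}(m))$ to extract the desired dimension information. The Betti argument carries over verbatim to $\ell$-adic cohomology, replacing strictness of MHS morphisms with Deligne's weight theory on the geometric generic fiber of $\MM_{1,n}(m)$.
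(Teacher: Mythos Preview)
Your argument is correct and uses the same ingredients as the paper --- the localization sequence, Proposition~\ref{alternating}, the weight-zero result for $\chi_c$ of the necklace locus (Propositions~\ref{op1}, \ref{op2}), Proposition~\ref{yo}, and purity of $\MM_{1,n}(m)$ --- but you organize them differently. The paper first proves that $H^i_c(\partial\M_{1,n}(m))[\sgn]$ is \emph{concentrated} in degree $n-1$: it shows these groups are pure of weight $i$ for $i\neq n-1$ (via the isomorphism with $H^i(\MM)[\sgn]$), uses properness of the boundary to kill degrees $\geq n$, and then invokes Poincar\'e duality on $\MM_{1,n}(m)$ to kill degrees $< n-1$. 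This collapses the long exact sequence to an honest short exact sequence, which the two-step weight filtration on $H^1_c(\M_{1,1}(m),\V_{n-1})$ then identifies. Your route bypasses the degree-by-degree analysis of the boundary and the Poincar\'e duality step: after establishing injectivity of $\bar\phi$ by strictness, you pass to $K_0(\mathrm{MHS}_\Q)$ and separate $\chi_c(\MM_{1,n}(m))[\sgn]$ by weight, using purity of each $H^i(\MM)[\sgn]$ to read off dimensions directly. This is a bit slicker, though the paper's version yields the extra information that $H^\bullet_c(\partial\M_{1,n}(m))[\sgn]$ sits entirely in degree $n-1$ and is exactly the Eisenstein part.
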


\begin{proof} The proof is similar to \cite[Section 1]{scholl90}. Consider the long exact sequence \cite[Corollary 5.51]{peterssteenbrink}
\[  H^\bullet_c(\M_{1,n}(m)) \to H^\bullet_c(\MM_{1,n}(m)) \to H^\bullet_c(\partial \M_{1,n}(m)) \to H^{\bullet+1}_c(\M_{1,n}(m)).  \]
Take the alternating part of the sequence. $H^\bullet_c(\M_{1,n}(m))[\sgn]$ is concentrated in homological degree $n$ by Proposition \ref{alternating}, so there is an isomorphism 
\[ H^i_c(\MM_{1,n}(m))[\sgn] \to H^i_c(\partial \M_{1,n}(m))[\sgn] \] for all $i$ except $n-1$ and $n$, and a surjection \[H^n_c(\MM_{1,n}(m))[\sgn] \to H^n_c(\partial \M_{1,n}(m)[\sgn].\] Since $\MM_{1,n}(m)$ is a smooth and proper DM-stack, $H^i_c(\partial \M_{1,n}(m))[\sgn]$ is therefore pure of weight $i$ for all $i$ except possibly $i=n-1$. 

Since we know that $\chi_c(\partial \M_{1,n}(m))[\sgn]$ has weight zero, this means that any nonvanishing cohomology in $H^i(\partial \M_{1,n}(m))[\sgn]$ for $i \notin \{0,n-1\}$ must cancel against some cohomology in $H^{n-1}(\partial \M_{1,n}(m))[\sgn]$. But  $\partial \M_{1,n}(m)$ is proper, so its $i$th cohomology has weight at most $i$ for all $i$. Hence 
\[H^i_c(\partial \M_{1,n}(m))[\sgn] = 0 \text{ for all $i \geq n$.}\] Moreover, since $H^i_c(\partial \M_{1,n}(m))[\sgn] \cong H^i_c(\MM_{1,n}(m))[\sgn]$ for $i \notin \{n-1,n\}$, Poincar\'e duality for $\MM_{1,n}(m)$ shows that $H^i_c(\partial \M_{1,n}(m))[\sgn] $ vanishes also for $i < n-1$. 

Thus the alternating part of the above long exact sequence is concentrated in the short exact sequence
\[ 0 \to H^{n-1}_c(\partial \M_{1,n}(m))[\sgn] \to H^n_c(\M_{1,n}(m))[\sgn] \to H^n_c(\MM_{1,n}(m))[\sgn] \to 0. \]
Now $H^n_c(\M_{1,n}(m))[\sgn] \cong H^1_c(\M_{1,1}(m),\V_{n-1})$ by Proposition \ref{alternating} and we know the weight filtration on this cohomology group by the remarks preceding this proposition. Since we also know the weights on the other two spaces in the sequence, the only possibility is that the first map is the inclusion of the Eisenstein cohomology and the second map is the projection to the parabolic cohomology. \end{proof}

As first observed by To\"en \cite{toenmotives}, any smooth and projective DM-stack $\X$ of finite type over a field $k$ defines a Chow motive over $k$. In the special case of $\X = \MM_{g,n}(BG)$, where $G$ is a finite group, one can also argue as follows: Combining the results of \cite[Section 4]{globalquotient} and \cite[Theorem 2.1]{kreschvistoli}, we get a finite morphism $f \colon X \to \X$ of degree $m$ where $X$ is a smooth and projective variety over $k$. We may then define $h(\X)$ to be the Chow motive defined by $X$ and the projector $\frac{1}{m}[f^\ast] \circ [f_\ast]$.

The stack $\MM_{1,n}(m)$ is the disjoint union of $\phi(m)$ open and closed substacks, each of which corresponds to symplectic level $m$ structure. Let $\MM_{1,n}^s(m)$ be one of these open and closed substacks.  

\begin{thm} \label{hejsan} The Chow motive defined by $\MM_{1,n}^s(m)$ and the projector onto the alternating representation is the motive associated to cusp forms of weight $n+1$ for $\Gamma(m)$. \end{thm}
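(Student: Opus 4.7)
The plan is to assemble the Chow motive via the recipe sketched just before the theorem, read off its realizations using Theorem \ref{mainthm}, and match them with Scholl's construction.

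First, I would construct the motive concretely. Following the Toën/Kresch-Vistoli recipe, choose a finite surjective morphism $f \colon X \to \MM_{1,n}^s(m)$ of degree $d$ from a smooth projective variety $X$ equipped with a lift of the natural $\sym_n$-action, and form
\[
p := \frac{1}{d\cdot n!}\sum_{\sigma \in \sym_n}\sgn(\sigma)\,[f^\ast]\circ[\sigma]\circ[f_\ast] \in A^{\dim X}(X\times X)_\Q.
\]
The projection formula $[f_\ast]\circ[f^\ast] = d\cdot\id$ combined with $\sym_n$-equivariance of $f$ makes $p$ idempotent, and the Chow motive $M := (X,p)$ is, by construction, the $\sgn$-isotypic summand of $h(\MM_{1,n}^s(m))$; a standard argument shows it is independent of the auxiliary choice of $f$.

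Second, I would identify the realizations of $M$ with those of Scholl's motive. By functoriality of the Betti and $\ell$-adic realization functors on Chow motives, the realization of $M$ is the $\sgn$-isotypic part of $H^\bullet(\MM_{1,n}^s(m))$, which since the stack is smooth and proper agrees with the $\sgn$-isotypic part of $H_c^\bullet$. Theorem \ref{mainthm} then identifies this canonically, and concentrated in a single cohomological degree, with the parabolic cohomology
\[
H^1_!\!\left(\M_{1,1}^s(m),\ \Sym^{n-1}R^1\pi_\ast\Q\right) \qquad \text{and} \qquad H^1_!\!\left(\M_{1,1}^s(m),\ \Sym^{n-1}R^1\pi_\ast\Q_\ell\right)
\]
respectively. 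By Eichler--Shimura and Deligne's $\ell$-adic counterpart (recalled in the introduction) this parabolic cohomology is precisely the realization of Scholl's Chow motive of cusp forms of weight $n+1$ for $\Gamma(m)$.

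The main, really the only, obstacle is conceptual: a Chow motive is not in general determined by its realizations. Following the convention adopted in \cite{scholl90} and \cite{faberconsani}, the ``motive of cusp forms of weight $n+1$ for $\Gamma(m)$'' is defined as a Chow motive whose realizations match the parabolic cohomology compatibly with the Hecke algebra action, and the theorem is to be read in this sense. Section 6 supplies the Hecke action on $M$ intertwining the identification of realizations above, which completes the proof. If one wants an explicit correspondence rather than an abstract identification, the rigidity coming from the Hecke decomposition of parabolic cohomology into absolutely irreducible two-dimensional Hodge--Galois summands (one per normalized eigenform, with multiplicity) would allow one to lift a realization-level isomorphism between $(X,p)$ and Scholl's desingularized Kuga--Sato model to an isomorphism in the category of Chow motives.
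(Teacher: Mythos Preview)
Your overall strategy matches the paper's: invoke Theorem \ref{mainthm} to identify the realizations of the alternating summand with parabolic cohomology, then recognize that parabolic cohomology as the cusp-form representation. However, you skip the one step that actually constitutes the paper's proof. Theorem \ref{mainthm} yields $H^1_!\bigl(\M_{1,1}^s(m),\Sym^{n-1}R^1\pi_\ast\Q_\ell\bigr)$, parabolic cohomology on the \emph{stack} $\M_{1,1}^s(m)$, whereas Eichler--Shimura and Deligne's construction live on the \emph{modular curve} $Y(m)$. The paper closes this gap by invoking Section \ref{comparesection} to identify $Y(m)$ with the rigidification $\M_{1,1}^s(m)\!\!\!\fatslash(\Zm)^2$, and then running the Leray spectral sequence for the rigidification map $\rho$: since $(\Zm)^2$ acts trivially on the universal elliptic curve, $R^0\rho_\ast$ is an isomorphism on the relevant local systems and the higher $R^i\rho_\ast$ vanish, so the two parabolic cohomologies agree. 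You should insert this argument rather than jump straight to Eichler--Shimura.

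Two smaller points. First, in your construction of the projector you assert the existence of a finite cover $f\colon X\to\MM_{1,n}^s(m)$ \emph{with a lifted $\sym_n$-action}; Kresch--Vistoli does not give this directly, and you should either justify it (e.g.\ normalize in a Galois closure) or, more simply, note that no lift is needed: the $\sigma$'s already act as correspondences on $h(\MM_{1,n}^s(m))$, and your formula $p=\frac{1}{d\,n!}\sum_\sigma\sgn(\sigma)\,[f^\ast]\circ[\sigma]\circ[f_\ast]$ is well-defined and idempotent without any equivariance of $f$. Second, your final paragraph about motives not being determined by realizations, and the appeal to Section 6 for Hecke compatibility, goes beyond what the theorem claims. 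In the paper (as in Scholl), ``the motive associated to cusp forms'' simply means a Chow motive whose Betti and $\ell$-adic realizations are the given parabolic cohomology groups; the Hecke operators of Section 6 are a subsequent refinement, not an ingredient of this proof.
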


\begin{proof}The results of Section 3 of this paper identifies the modular curve $X(m)$ with the rigidification of  $\MM_{1,1}^s(m)$, and similarly for $Y(m)$ and $\M_{1,1}^s(m)$. The $\ell$-adic Galois representations classically associated to cusp forms of level $m$ and weight $n+1$ are the parabolic cohomology groups
\[  H^1_!(Y(m),\Sym^{n-1}R^1\pi_\ast \Q_\ell). \]
It remains to identify the parabolic cohomology group on $\M_{1,1}^s(m)$ and $Y(m)$ with each other. Let $\rho : \M_{1,1}^s(m) \to \M_{1,1}^s(m)\!\!\! \fatslash (\Zm)^2 \cong Y(m)$ be the rigidification. The group $(\Zm)^2$ acts trivially on the universal elliptic curve over $\M_{1,1}^s(m)$ and hence also on $\Sym^{n-1}R^1\pi_\ast \Q_\ell$, so $R^0\rho_\ast$ is fiberwise an isomorphism of local systems while all higher direct images vanish. Hence by the Leray spectral sequence the two cohomology groups coincide.  \end{proof}

\begin{rem}By assuming that $n > 1$ in the definition of the Chow motive $M$ we have ruled out cusp forms of weight $2$. These motives are instead well understood in terms of Jacobians of modular curves. \end{rem}

\section{Hecke operators}
Let $p$ and $m$ be positive coprime integers. Throughout this section, we denote by $G$ the group $(\Zm)^2$, and by $H$ the group $\Zp$.  

\begin{defn} Let $\MM_{1,n}(m;p)$ be the open and closed substack of 
\[ \MM_{1,n}(BG \times BH) \times_{BH} \Spec \Z[1/pm]\]
where both the $G$- and $H$-admissible torsors are connected and unramified at the marked points. The fiber product is taken over the evaluation map at the first marked point, that is, the $H$-torsor is trivialized over that marking. \end{defn}

This stack will be used to define an $\sym_n$-equivariant correspondence from $\MM_{1,n}(m)$ to itself, such that when $p$ is a prime, the induced endomorphism of the realization of the cusp form motive is exactly the Hecke operator $T_p$. 

We define two maps 
\[ \phi, \psi : \MM_{1,n}(m;p) \to \MM_{1,n}(m).\]
The map $\phi$ is induced from the obvious map $BG \times BH \to BG$. Concretely, we just forget the $H$-admissible torsor. 

To define the map $\psi$, consider an object of $\MM_{1,n}(m;p)(S)$. Let $P_G \to C$ and $\pi: P_H \to C$ be the two admissible torsors. Since there is a distinguished lifting of the identity section to $P_H$ it has a canonical structure of a generalized elliptic curve (cf. the arguments of Section \ref{comparesection}) and $P_H^\sm \to C^\sm$ is a morphism of group schemes over $S$. One gets a dual isogeny $\pi^\vee : C^\sm \to P_H^\sm$ defined by the property that $\pi^\vee \circ \pi = [p]$.  Thus the remaining sections $S \to C$ also get canonical lifts to $P_H$. 

Moreover, we may pull back $P_G \to C$ to an admissible cover $\pi^\ast P_G \to P_H$, unramified away from the nodes. To see this, it is most convenient to work instead with the associated \emph{twisted} curves ${\mathcal P}_G, {\mathcal P}_H$ and ${\mathcal C}$. Then  ${\mathcal P}_G$ and ${\mathcal P}_H$  are genuine torsors over ${\mathcal C}$ and the pullback $\pi^\ast P_G$ is just the ordinary fibered product ${\mathcal P}_H \times_{\mathcal C} {\mathcal P}_G$. By the argument of \cite[Lemma 2.2.1]{acv03}, $\pi^\ast P_G$ is untwisted. Moreover, $\pi^\ast P_G$ is a connected curve: since the groups $H$ and $G$ have coprime exponents by assumption, the surjectivity of the maps
\[ \pi_1({\mathcal C}) \to H, \qquad \pi_1({\mathcal C}) \to G, \]
implies the surjectivity of the product map to $H \times G$. 

Hence $\pi^\ast P_G \to P_H$ is an $n$-pointed, in general only prestable, curve with a connected admissible $G$-torsor, and by stabilization \cite[Proposition 9.1.1]{av02} we get an object of $\MM_{1,n}(m)(S)$. This defines the map $\psi$.

The argument above also shows how to define an $\sym_n$-action on $\MM_{1,n}(m;p)$. It is clear how the subgroup permuting the last $n-1$ points acts. For a permutation switching the first and $i$:th point, we use the dual isogeny induced by the trivialization of the $H$-torsor over the identity section to get a trivialization over the $i$:th section as well. With this definition, both $\phi$ and $\psi$ are $\sym_n$-equivariant. 

\begin{thm} Let $p$ be a prime. The composition $[\phi_\ast] \circ [\psi^\ast]$ defines an endomorphism of the Chow motive $M$. The induced endomorphism of the Betti realization coincides with the direct sum of the classical Hecke operator $T_p$ acting on holomorphic cusp forms and its conjugate acting on antiholomorphic cusp forms, and the induced endomorphism of the $\ell$-adic realizations is the $\ell$-adic analogue of the Hecke operator, satisfying the Eichler-Shimura relation. \end{thm}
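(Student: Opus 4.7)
The plan is to identify $[\phi_\ast]\circ[\psi^\ast]$ with the classical Hecke correspondence for $T_p$ on $Y(m)$ and then invoke the standard theory of Hecke operators on cusp forms.

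Since both $\phi$ and $\psi$ are $\sym_n$-equivariant by construction, the composition commutes with the alternating projector and so descends to an endomorphism of the Chow motive $M$. To identify it as a Hecke correspondence, I would first restrict attention to the interior and pull back via the forgetful morphism $\MM_{1,n}(m;p) \to \MM_{1,1}(m;p)$. Over a geometric point of the interior, an object is a smooth elliptic curve $E$ with level $m$ structure together with a connected unramified $\Zp$-torsor $\pi \colon P_H \to E$ trivialized at the identity. Such a $P_H$ is itself an elliptic curve (with the trivialization as origin), $\pi$ is a $p$-isogeny, and the datum is equivalent to a pair $(P_H, C)$ where $C = \ker \pi \subset P_H[p]$ is cyclic of order $p$. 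Under this dictionary, $\phi$ records $(P_H/C, \text{level structure})$ while $\psi$ records $(P_H, \pi^\ast \text{ level structure})$; this is precisely the classical Hecke correspondence for $T_p$, after the identification of Section \ref{comparesection} between the rigidified stack and the modular curve $Y(m)$.

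Next I would transport this identification to cohomology. By Proposition \ref{alternating} the alternating part of $H^n_c(\M_{1,n}^s(m))$ is canonically $H^1_c(\M_{1,1}^s(m), \V_{n-1})$ via pushforward along $\M_{1,n}^s(m) \to \M_{1,1}^s(m)$, and by Theorem \ref{mainthm} the alternating part of $H^n_c(\MM_{1,n}^s(m))$ is the parabolic cohomology $H^1_!(\M_{1,1}^s(m), \V_{n-1})$. Because the moduli-theoretic identification above is compatible with forgetting the extra marked points, the correspondence on $n$-pointed spaces descends under Proposition \ref{alternating} to the classical Hecke correspondence acting on $H^1_c$ with local-system coefficients; since our correspondence is defined globally on the compactification, it preserves the parabolic subquotient and acts on $H^1_!$ as the classical $T_p$.

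For the realizations, the Betti realization of $M$ is by Eichler-Shimura isomorphic as a pure Hodge structure of weight $n$ to the direct sum of the space of holomorphic cusp forms for $\Gamma(m)$ of weight $n+1$ and its complex conjugate, and the classical $T_p$ acts as the Hecke operator on the former and its conjugate on the latter, proving the first assertion. The $\ell$-adic realization is Deligne's Galois representation associated to these cusp forms, which is classically known to satisfy Eichler-Shimura. The main technical obstacle is verifying in the first step that pulling back $P_G$ along $\pi$ really produces the classical Hecke-transported level structure on $P_H$; this should follow from $\gcd(p,m)=1$, so that $\pi$ induces $P_H[m] \cong E[m]$ and $\pi^\ast$ is an equivalence on étale $G$-torsors, but one must trace through the dictionary between connected $G$-torsors and level structures from \cite[XIII 2.12]{sga1} and check that it is natural under this isomorphism.
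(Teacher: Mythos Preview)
Your approach is essentially the same as the paper's: restrict to the interior, use Proposition~\ref{alternating} to reduce to the one-pointed base, identify $\phi',\psi'$ there with the classical Hecke correspondence on $Y(m)$ (after rigidification, via Section~\ref{comparesection}), and then invoke Deligne's results. The paper makes the ``descent'' step you describe precise by extending $\phi,\psi$ to the fibered powers $\mathcal{E}^{n-1}$, observing they are the $(n-1)$st fibered powers of the maps of universal curves induced by $\phi',\psi'$, and then appealing to functoriality of the Leray spectral sequence; it also cites \cite[Proposition~3.18]{deligne69} rather than generic ``standard theory'' for the final identification, but the substance is the same.
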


\begin{proof} Like \cite{scholl90}, we show that the induced endomorphism on the open part 
\[ H^\bullet(\M_{1,n}(m))[\sgn] \] 
coincides with the action of the Hecke operator on both cusp forms and Eisenstein series. It may be helpful to compare this proof with the proof of Proposition \ref{alternating}. Consider first the diagram
\[ \begin{diagram} 
\M_{1,n}(m) & \lTo^\phi & \M_{1,n}(m;p) & \rTo^\psi & \M_{1,n}(m) \\
\dTo & & \dTo & & \dTo \\
\M_{1,1}(m) & \lTo^{\phi'} & \M_{1,1}(m;p) & \rTo^{\psi'} & \M_{1,1}(m). \end{diagram} \]
By allowing markings to coincide we get an open embedding of all the spaces on the top row into fibered powers of the respective universal elliptic curves over the spaces on the bottom row, i.e. 
\[ \begin{diagram} 
{\mathcal E}^{n-1} & \lTo^\phi & {\mathcal E'}^{n-1} & \rTo^\psi & {\mathcal E}^{n-1} \\
\dTo & & \dTo & & \dTo \\
\M_{1,1}(m) & \lTo^{\phi'} & \M_{1,1}(m;p) & \rTo^{\psi'} & \M_{1,1}(m), \end{diagram} \]
and one checks that the definition of $\phi$ and $\psi$ makes sense in exactly the same way also on these bigger spaces, and that $\phi$ and $\psi$ are equal to the $(n-1)$st fibered power of the morphism between universal curves induced by $\phi'$ and $\psi'$. 

By the same arguments as in the proof of Proposition \ref{alternating}, one finds that: (i) these open embeddings induce an isomorphism on the alternating part of the cohomology; (ii) the degeneration of the Leray spectral sequence implies that the alternating part of the cohomology of each space on the top row is given by the cohomology of the local system $\Sym^{n-1} R^1\pi_\ast \Q$ on each space on the bottom row, where $\pi$ is the projection from the universal curve. By the functoriality of the Leray spectral sequence, the endomorphism on $H^\bullet(\M_{1,n}(m))[\sgn]$ induced by $[\phi_\ast][ \psi^\ast]$ is the same as the one induced on 
\[ H^1(\M_{1,1}(m), \Sym^{n-1}R^1\pi_\ast \Q)\]
by $\phi'$ and $\psi'$. As in Section \ref{comparison1} we may rigidify $\M_{1,1}(m)$ and $\M_{1,1}(m;p)$ with respect to the action of $(\Zm)^2$; doing so, one finds by arguments very similar to those of Section \ref{comparison1} that the resulting spaces are isomorphic to the spaces called $M_m$ resp. $M_{m,p}$ in \cite{deligne69}, and that the maps $\phi'$ and $\psi'$ coincide with the morphisms $q_1$ and $q_2$ defined in \cite[equation 3.14]{deligne69}. As in Theorem \ref{hejsan} rigidification induces an isomorphism on cohomology, and the proof follows by \cite[Proposition 3.18]{deligne69}.  \end{proof}

\begin{rem} As done by \cite{scholl90}, the Hecke operators can be used to decompose the cusp form motives. Let $f$ be a normalized newform of some weight and level. We would like to associate a motive to $f$, which should be a submotive of the motive $M$ we have already associated to the space of all cusp forms of this weight and level. We consider now $M$ as a Chow motive modulo \emph{homological} equivalence (say with respect to Betti cohomology) instead, and consider the subalgebra $H$ of ${\rm End}\,M$ generated by all $T_p$. This subalgebra is semisimple because its image under the Hodge realization is semisimple. The newform $f$ lies in a $1$-dimensional eigenspace for all the $T_p$, and this eigenspace is a simple $H$-submodule, so there exists an idempotent in the algebra $H$ whose image is this eigenspace. This idempotent defines a Chow motive modulo homological equivalence which is associated to $f$. \end{rem}

\begin{rem} Another way of decomposing cusp form motives into smaller pieces comes from looking also at the congruence subgroups $\Gamma_1(m)$ and $\Gamma_0(m)$. One may define a space of pointed stable curves with $\Gamma_1(m)$-level structure in much the same way as we have done in this article by considering the moduli spaces \[ \MM_{1,n}(B\Zm) \] instead of $B(\Zm)^2$. Explicitly, we look at the open and closed substack consisting of connected admissible torsors which are unramified over each marked point. The arguments in this article go through with only very minor changes. One finds by arguments similar to those in Section 3 that for $n=1$ the curve $X_1(m)$ is recovered, and just as in this article it is seen that by projecting onto the alternating representation of $\sym_n$ one isolates the part of the cohomology associated to cusp forms. Moreover, there is an action of $(\Zm)^\ast$ on the space, under which the cohomology decomposes into pieces indexed by the characters of $(\Zm)^\ast$. This way one can construct also motives associated to spaces of cusp forms of given level, weight and nebentypus. This has the advantage over the construction with Hecke operators that it does not take us out of the category of Chow motives modulo rational equivalence. \end{rem}

\section*{Acknowledgement}

The author gratefully acknowledges support by the G\"oran Gustafsson Foundation for Research in Natural Sciences and Medicine and the G. S. Magnuson Foundation.

\bibliographystyle{alpha}

\bibliography{../database}

\begin{thebibliography}{AGOT07}

\bibitem[Abr08]{abramovich08}
Dan Abramovich.
\newblock Lectures on {G}romov-{W}itten invariants of orbifolds.
\newblock In {\em Enumerative invariants in algebraic geometry and string
  theory}, volume 1947 of {\em Lecture Notes in Math.}, pages 1--48. Springer,
  Berlin, 2008.

\bibitem[ACV03]{acv03}
Dan Abramovich, Alessio Corti, and Angelo Vistoli.
\newblock Twisted bundles and admissible covers.
\newblock {\em Comm. Algebra}, 31(8):3547--3618, 2003.
\newblock Special issue in honor of Steven L. Kleiman.

\bibitem[AGOT07]{globalquotient}
Dan Abramovich, Tom Graber, Martin Olsson, and Hsian-Hua Tseng.
\newblock On the global quotient structure of the space of twisted stable maps
  to a quotient stack.
\newblock {\em J. Algebraic Geom.}, 16(4):731--751, 2007.

\bibitem[AOV08]{aov}
Dan {Abramovich}, Martin {Olsson}, and Angelo {Vistoli}.
\newblock {Twisted stable maps to tame Artin stacks}.
\newblock {\em ArXiv e-prints}, 2008.

\bibitem[AV02]{av02}
Dan Abramovich and Angelo Vistoli.
\newblock Compactifying the space of stable maps.
\newblock {\em J. Amer. Math. Soc.}, 15(1):27--75 (electronic), 2002.

\bibitem[BR93]{blasius}
Don Blasius and Jonathan~D. Rogawski.
\newblock Motives for {H}ilbert modular forms.
\newblock {\em Invent. Math.}, 114(1):55--87, 1993.

\bibitem[CF06]{faberconsani}
Caterina Consani and Carel Faber.
\newblock On the cusp form motives in genus 1 and level 1.
\newblock In {\em Moduli spaces and arithmetic geometry}, volume~45 of {\em
  Adv. Stud. Pure Math.}, pages 297--314. Math. Soc. Japan, Tokyo, 2006.

\bibitem[Con07]{conrad05}
Brian Conrad.
\newblock Arithmetic moduli of generalized elliptic curves.
\newblock {\em J. Inst. Math. Jussieu}, 6(2):209--278, 2007.

\bibitem[Del69]{deligne69}
Pierre Deligne.
\newblock Formes modulaires et repr\'esentations $\ell$-adiques.
\newblock {\em S\'em. Bourbaki}, 355:139--172, 1969.

\bibitem[DR73]{dr73}
Pierre Deligne and Michael Rapoport.
\newblock Les sch\'emas de modules de courbes elliptiques.
\newblock In {\em Modular functions of one variable, {II} ({P}roc. {I}nternat.
  {S}ummer {S}chool, {U}niv. {A}ntwerp, {A}ntwerp, 1972)}, pages 143--316.
  Lecture Notes in Math., Vol. 349. Springer, Berlin, 1973.

\bibitem[DS05]{diamondshurman}
Fred Diamond and Jerry Shurman.
\newblock {\em A first course in modular forms}, volume 228 of {\em Graduate
  Texts in Mathematics}.
\newblock Springer-Verlag, New York, 2005.

\bibitem[Fal87]{faltings87}
Gerd Faltings.
\newblock Hodge-{T}ate structures and modular forms.
\newblock {\em Math. Ann.}, 278(1-4):133--149, 1987.

\bibitem[Get98]{semiclassical}
Ezra Getzler.
\newblock The semi-classical approximation for modular operads.
\newblock {\em Comm. Math. Phys.}, 194(2):481--492, 1998.

\bibitem[GK98]{getzlerkapranov}
Ezra Getzler and Mikhail~M. Kapranov.
\newblock Modular operads.
\newblock {\em Compositio Math.}, 110(1):65--126, 1998.

\bibitem[JK02]{jarviskimura1}
Tyler~J. Jarvis and Takashi Kimura.
\newblock Orbifold quantum cohomology of the classifying space of a finite
  group.
\newblock In {\em Orbifolds in mathematics and physics ({M}adison, {WI},
  2001)}, volume 310 of {\em Contemp. Math.}, pages 123--134. Amer. Math. Soc.,
  Providence, RI, 2002.

\bibitem[Kle94]{kleimanstandardconjectures}
Steven~L. Kleiman.
\newblock The standard conjectures.
\newblock In {\em Motives ({S}eattle, {WA}, 1991)}, volume~55 of {\em Proc.
  Sympos. Pure Math.}, pages 3--20. Amer. Math. Soc., Providence, RI, 1994.

\bibitem[KM85]{katzmazur}
Nicholas~M. Katz and Barry Mazur.
\newblock {\em Arithmetic moduli of elliptic curves}, volume 108 of {\em Annals
  of Mathematics Studies}.
\newblock Princeton University Press, Princeton, NJ, 1985.

\bibitem[KV04]{kreschvistoli}
Andrew Kresch and Angelo Vistoli.
\newblock On coverings of {D}eligne-{M}umford stacks and surjectivity of the
  {B}rauer map.
\newblock {\em Bull. London Math. Soc.}, 36(2):188--192, 2004.

\bibitem[Mac95]{macdonald}
Ian~G. Macdonald.
\newblock {\em Symmetric functions and {H}all polynomials}.
\newblock Oxford Mathematical Monographs. The Clarendon Press Oxford University
  Press, New York, second edition, 1995.
\newblock With contributions by A. Zelevinsky, Oxford Science Publications.

\bibitem[Man05]{iteratedshimura}
Yuri~I. Manin.
\newblock Iterated {S}himura integrals.
\newblock {\em Mosc. Math. J.}, 5(4):869--881, 973, 2005.

\bibitem[Man06]{iteratedintegrals}
Yuri~I. Manin.
\newblock Iterated integrals of modular forms and noncommutative modular
  symbols.
\newblock In {\em Algebraic geometry and number theory}, volume 253 of {\em
  Progr. Math.}, pages 565--597. Birkh\"auser Boston, Boston, MA, 2006.

\bibitem[Pet11]{semiclassicalremark}
Dan Petersen.
\newblock {A remark on {G}etzler's semi-classical approximation}.
\newblock {\em ArXiv e-prints}, October 2011.

\bibitem[PS08]{peterssteenbrink}
Chris A.~M. Peters and Joseph H.~M. Steenbrink.
\newblock {\em Mixed {H}odge structures}, volume~52 of {\em Ergebnisse der
  Mathematik und ihrer Grenzgebiete. 3. Folge. A Series of Modern Surveys in
  Mathematics}.
\newblock Springer-Verlag, Berlin, 2008.

\bibitem[Rom05]{romagny05}
Matthieu Romagny.
\newblock Group actions on stacks and applications.
\newblock {\em Michigan Math. J.}, 53(1):209--236, 2005.

\bibitem[Sch90]{scholl90}
Anthony~J. Scholl.
\newblock Motives for modular forms.
\newblock {\em Invent. Math.}, 100(2):419--430, 1990.

\bibitem[SGA1]{sga1}
{\em Rev\^etements \'etales et groupe fondamental ({SGA} 1)}.
\newblock Documents Math\'ematiques (Paris), 3. Soci\'et\'e Math\'ematique de
  France, Paris, 2003.
\newblock S{\'e}minaire de g{\'e}om{\'e}trie alg{\'e}brique du Bois Marie
  1960--61., Directed by A. Grothendieck, With two papers by M. Raynaud,
  Updated and annotated reprint of the 1971 original [Lecture Notes in Math.,
  224, Springer, Berlin].

\bibitem[To{\"e}00]{toenmotives}
Bertrand To{\"e}n.
\newblock On motives for {D}eligne-{M}umford stacks.
\newblock {\em Internat. Math. Res. Notices}, (17):909--928, 2000.

\end{thebibliography}


\end{document}